\newtheorem{assumption}{Assumption}[section]
\newtheorem{remark}{Remark}[section]
\newtheorem{example}{Example}[section]
\numberwithin{equation}{section}
\title{A multi-level ADMM algorithm for elliptic PDE-constrained optimization problems
}
\author{Xiaotong Chen\thanks{School of Mathematical Sciences, Dalian University of Technology, Dalian, Liaoning 116025, China. ({\tt chenxiaotong@mail.dlut.edu.cn, yubo@dlut.edu.cn, chenzixuan@mail.dlut.edu.cn}).} 
\and Xiaoliang Song\thanks{Department of Applied Mathematics, The Hong Kong Polytechnic University, Hung Hom, Kowloon, Hong Kong, China. ({\tt xiaoliang.song@polyu.edu.hk}).}
\and Zixuan Chen\footnotemark[1]
\and Bo Yu\footnotemark[1]
}
\begin{document}

\maketitle

\begin{abstract}
In this paper, the elliptic PDE-constrained optimization problem with box constraints on the control is studied. To numerically solve the problem, we apply the \emph{`optimize-discretize-optimize'} strategy. Specifically, the alternating direction method of multipliers (ADMM) algorithm is applied in function space first, then the standard piecewise linear finite element approach is employed to discretize the subproblems in each iteration. Finally, some efficient numerical methods are applied to solve the discretized subproblems based on their structures. Motivated by the idea of the multi-level strategy, instead of fixing the mesh size before the computation process, we propose the strategy of gradually refining the grid. Moreover, the subproblems in each iteration are solved inexactly. Based on the strategies above, an efficient convergent multi-level ADMM (mADMM) algorithm is proposed. We present the convergence analysis and the iteration complexity results $o(1/k)$ of the proposed algorithm for the PDE-constrained optimization problems. Numerical results show the high efficiency of the mADMM algorithm. 
\end{abstract}

\begin{keywords}
PDE-constrained optimization, ADMM, multi-level, convergence analysis
\end{keywords}

\begin{AMS}
49N05, 49M25, 65N12, 68W15
\end{AMS}


\section{Introduction}
\label{intro}
In this paper, we consider the elliptic PDE-constrained optimization  problem with box constraints on the control:
\begin{equation}\label{eqn:orginal problems}
\begin{aligned}
\min \limits_{(y,u)\in Y\times U}^{}\ \ &J(y,u)=\frac{1}{2}\|y-y_d\|_{L^2(\Omega)}^{2}+\frac{\alpha}{2}\|u\|_{L^2(\Omega)}^{2} \\
{\rm s.t.}\ \ \ \ \ \ &Ly=u+y_{r}\ \ \ \ \ \mathrm{in} \  \Omega, \\
&\ \ y=0 \quad  \ \ \ \ \ \ \ \  \ \mathrm{on} \ \partial\Omega,\\
\qquad \qquad\quad &\ \ u\in  U_{ad}=\{v(x)|a\leq v(x)\leq b, {\rm a.e }\  \mathrm{on}\ \Omega\}\subseteq U,
\end{aligned} \tag{$\mathrm{P}$}
\end{equation}
where $Y:=H_{0}^{1}(\Omega), U:=L^{2}(\Omega), \Omega\subseteq\mathbb{R}^{n}(n=2,3)$ is a convex, open and bounded domain with $C^{1,1}$-  or polygonal boundary; the desired state $y_{d}\in L^{2}(\Omega)$ and the source term $y_{r}\in L^{2}(\Omega)$ are given; parameters $\alpha>0$, $-\infty<a<b<+\infty$; $L$ is the uniformly elliptic differential operater given by
\begin{equation*}
Ly:=-\sum\limits_{i,j=1}^{n}(a_{ij}y_{x_{i}})_{{x_{j}}}+c_{0}y, 
\end{equation*}
where $a_{ij},c_{0}\in L^{\infty}(\Omega),c_{0}\geqslant 0,a_{ij}=a_{ji},\sum\limits_{i,j=1}^{n}a_{ij}(x)\xi_{i}\xi_{j}\geqslant\theta\|\xi\|^{2}, \ \rm a.a. \ x\in \Omega, \forall \xi\in \mathbb{R}^{n}.$

By the classical conclusion in the PDE theory, for a given $y_{r}\in L^{2}(\Omega)$ and every $u\in L^{2}(\Omega)$, elliptic PDEs involved in the (\ref{eqn:orginal problems})
\begin{equation}\label{eqn:pde}
\begin{aligned}
Ly&=u+y_{r}\quad   \mathrm{in}\ \Omega, \\
y&=0\quad \quad \quad \ \mathrm{on}\ \partial\Omega,\\
\end{aligned}
\end{equation}
has a unique weak solution $y=y(u):=S(u+y_{r})$, where $S:L^{2}(\Omega)\rightarrow H_{0}^{1}(\Omega)$ denotes the solution operator. It is well-defined and is a continuous linear injective operator\cite[Theorem B.4]{Kinderlehrer1980An}.

As is known to all, there are two possible approaches to tackle optimization problems with PDE constraints numerically. One is \emph{First discretize, then optimize}, another is \emph{First optimize, then discretize} \cite{Hinze2009Optimization}. In the first approach, the discretization is applied to the original PDE-constrained optimization  problem, while in the second one, the discretization is applied to the KKT system of the PDE-constrained optimization  problem. 
Different from the strategies mentioned above, instead of applying discretized concept to problem (\ref{eqn:orginal problems}) or its KKT system directly, \emph{`optimize-discretize-optimize'} strategy was proposed by Song in \cite{Song2018}. First the optimization algorithm is given in the sense of continuous function space, then the subproblems in each iteration are discretized by the standard piecewise linear finite element approach. Finally, numerical optimization methods are applied to solve the discretized subproblems numerically. The advantage of this method is that from the optimization algorithm in function space, we can have a better knowledge of the structure of the PDE-constrained optimization  problem, which is important for choosing an appropriate discretization format to discretize the subproblems. Moreover, this strategy gives the freedom to discretize the subproblems differently, which makes the subproblems can be solved more effectively.  

In this paper, we focus on the alternating direction method of multipliers (ADMM) method, which was originally proposed by Glowinski et al. and Gabay et al. in \cite{Gabay1976A,Glowinski1975} and has been broadly used in many areas. Motivated by the success of ADMM in solving finite dimensional large scale optimization problem \cite{Chen2017,Fazel2013,Li2015,Li2016}, ADMM-type method has been used to solve PDE-constrained optimization  problems in function space in \cite{Chen2018,li2018efficient,Song2018A,zhang2017alternating}. While these works all focus on \emph{First discretize, then optimize} method, and to the best of our knowledge, very little work has been done to apply \emph{First optimize, then discretize} to solve PDE-constrained optimization  problems by ADMM-type method. We apply ADMM as the outer optimization algorithm of the \emph{`optimize-discretize-optimize'} strategy, propose a new algorithm to solve PDE-constrained optimization  problems efficiently and give the convergence analysis.

First,  we briefly give the iterative format of the classical ADMM for the 2-block convex optimization problem with linear constraints:
\begin{equation}
\begin{aligned} 
\min \limits_{(x,y)\in X\times Y}^{}&f(x)+g(y) \\
{\rm s.t.}\ \ \ \ &Ax+By=c,
\end{aligned}
\end{equation}
where $X$,$Y$,$\Lambda$ are finite dimensional real Euclidean spaces, $f(x):X\rightarrow (-\infty, +\infty]$, $g(y):Y\rightarrow (-\infty, +\infty]$ are convex functions (maybe nonsmooth), $A: X\rightarrow \Lambda$, $B: Y\rightarrow \Lambda$ are linear mappings, $c \in \Lambda$ is given.
The augmented Lagrangian function is given by the following form:
\begin{equation}
  \mathcal{L}_{\rho}(x,y,\lambda;\rho)=f(x)+g(y)+(\lambda,Ax+By-c)+\frac{\rho}{2}\|Ax+By-c\|^2,
\end{equation}
where $\lambda \in \Lambda$ denotes the Lagrange multiplier, $\rho>0$ is the penalty parameter.

Given $(x^0,y^0,\lambda^0)\in {\rm dom}f\times{\rm dom}g\times \Lambda$, penalty parameter $\rho>0$ and the step size parameter $\tau\in \left(0,\frac{\sqrt{5}+1}{2}\right)$, then the iterative format of the classical ADMM is as follows:
\begin{equation}
\left\{\begin{aligned}
x^{k+1}&={\text{argmin}}\mathcal{L}_{\rho}(x,y^k,\lambda^k;\rho),\\
y^{k+1}&={\text{argmin}} \mathcal{L}_{\rho}(x^{k+1},y,\lambda^k;\rho),\\
\lambda^{k+1}&=\lambda^k+\tau\rho(Ax^{k+1}+By^{k+1}-c).
\end{aligned} \right.
\end{equation}

The advantage of ADMM is that it separates $f(x)$ and $g(y)$ into two subproblems. In each subproblem, there is only one variable, the other one is fixed. Thus each subproblem could be solved easily and efficiently. For the classical ADMM, the convergence analysis was first conducted by \cite{Fortin1983Chapter,Gabay1976A,Glowinski1980Lectures}, and for the recent interesting new developments on the convergence analysis of ADMM-type method, see \cite{han2017linear,Sun2014A,2018arXiv180904249Y}. 

To apply ADMM-type method to solve the problem (\ref{eqn:orginal problems}), we use the solution operator $S$ and introduce an artificial variable $z$, then we equivalently rewrite problem (\ref{eqn:orginal problems}) as the following reduced form:
\begin{equation}\label{eqn:reduced problem with linear constraint}
\begin{aligned}
\min \limits_{(u,z)\in U\times Z}^{}\ \ \  &\hat{J}(u)+\delta_{U_{ad}}(z) \\
{\rm s.t.}\qquad \ &u=z, 
\end{aligned} \tag{RP}
\end{equation}
where $\hat{J}(u):=\dfrac{1}{2}\|S(u+y_{c})-y_{d}\|_{L^{2}(\Omega)}^{2}+\dfrac{\alpha}{2}\|u\|_{L^{2}(\Omega)}^{2}$ denotes the reduced cost function, $\delta_{U_{ad}}(z)$ denotes the indicator function of $U_{ad}$, 
\begin{equation}
\delta_{U_{ad}}(z)=\left\{\begin{array}{ll}{1,} & {z \in U_{ad}}, \\ {\infty,} & {z \notin U_{ad}}.\end{array}\right.
\end{equation}
The augmented Lagrangian function of (\ref{eqn:reduced problem with linear constraint}) is defined as follows:
\begin{equation}\label{eqn:lagrangian function}
L_{\sigma}(u,z,\lambda;\sigma)=\hat{J}(u)+\delta_{U_{ad}}(z)+\langle\lambda,u-z\rangle_{L^{2}(\Omega)}+\dfrac{\sigma}{2}\|u-z\|_{L^{2}(\Omega)}^{2},
\end{equation}
where $\lambda \in L^{2}(\Omega)$ denotes the Lagrangian multiplier, $\sigma>0$ is the penalty parameter. 

The classical ADMM in finite dimensional spaces can be extended directly in Hilbert space for problem (\ref{eqn:reduced problem with linear constraint}). Given $(u^{0},z^{0},{\lambda}^{0})\in L^{2}(\Omega)\times {\rm dom}(\delta_{U_{ad}}(\cdot))\times L^{2}(\Omega)$, parameters $\sigma>0$, $\tau \in \left(0,\dfrac{1+\sqrt{5}}{2}\right)$, the iterative scheme is presented as follows:

\begin{equation}
\left\{\begin{aligned}
\bar u^{k+1}&={\rm argmin} \hat{J}(u)+\langle \bar \lambda^{k},u-\bar z^{k}\rangle_{L^{2}(\Omega)}+\dfrac{\sigma}{2}\|u-\bar z^{k}\|_{L^{2}(\Omega)}^{2},\\
\bar z^{k+1}&={\rm argmin} \delta_{U_{ad}}(z)+\langle \bar \lambda^{k},\bar u^{k+1}-z\rangle_{L^{2}(\Omega)}+\dfrac{\sigma}{2}\|\bar u^{k+1}-z\|_{L^{2}(\Omega)}^{2},\\
\bar \lambda^{k+1}&=\bar \lambda^{k}+\sigma(\bar u^{k+1}-\bar z^{k+1}).
\end{aligned} \right.
\end{equation}

Usually, it is expensive and unnecessary to exactly compute the solution of each subproblem even if it is feasible. Thus it is natural to use some iterative methods such as Krylov-based methods to solve the subproblems which are equivalent to large scale or ill-conditioned linear systems. The inexact ADMM algorithm in finite dimension space and its convergence results under certain error criterion have been studied extensively recently (see \cite{Chen2017, Li2015}). 
Taking the inexactness of the solutions in the function space into account, Song et al. applied the inexact ADMM algorithm to Hilbert space for PDE-constrained optimization problems and presented the convergence results in \cite{Song2017Fe}. Given $(u^{0},z^{0},{\lambda}^{0})\in L^{2}(\Omega)\times {\rm dom}(\delta_{U_{ad}}(\cdot))\times L^{2}(\Omega)$, parameters $\sigma>0$, $\tau \in \left(0,\dfrac{1+\sqrt{5}}{2}\right)$. Let$\lbrace\epsilon_{k}\rbrace_{k=0}^{\infty}$ be a sequence satisfying $\lbrace\epsilon_{k}\rbrace_{k=0}^{\infty}\subseteq[0,+\infty)$ and $\sum_{k=0}^{\infty}\epsilon_{k}<\infty$, the iterative scheme of the inexact ADMM in function space for (\ref{eqn:reduced problem with linear constraint}) is as follows:
\begin{equation}
\left\{\begin{aligned}
\text{Step 1}:\ &\text{Compute an approximation solution $u^{k+1}$ of}\\
&\min_{u} \hat{J}(u)+\langle \lambda^{k},u-z^{k}\rangle_{L^{2}(\Omega)}+\dfrac{\sigma}{2}\|u-z^{k}\|_{L^{2}(\Omega)}^{2}\\
&\text{such that the error vector}\  \delta_{u}^{k+1}:=\nabla \hat{J}(u^{k+1})+\lambda^{k}+\sigma(u^{k+1}-z^{k}) \ \text{satisfies }  \|\delta_{u}^{k+1}\|_{L^{2}(\Omega)}\leq \epsilon_{k}.\\
\text{Step 2}: \ &z^{k+1}={\rm argmin} \delta_{U_{ad}}(z)+\langle \lambda^{k},u^{k+1}-z\rangle_{L^{2}(\Omega)}+\dfrac{\sigma}{2}\|u^{k+1}-z\|_{L^{2}(\Omega)}^{2}.\\
\text{Step 3}: \ &\lambda^{k+1}=\lambda^{k}+\tau \sigma(u^{k+1}-z^{k+1}).
\end{aligned} \right.
\end{equation}

In \cite{Song2017Fe}, the discretized problem is considered and the level of discretization is fixed. In this paper, instead of considering the discretized problem, the finite element method is employed to discretize the subproblems in each iteration of the inexact ADMM algorithm. This strategy gives the freedom and flexibility to discretize the subproblems by different discretization schemes. The total error of the proposed algorithm for (\ref{eqn:reduced problem with linear constraint}) is consisted of two parts: the discretization error and the iteration error resulted from inexactly solving the discretized subproblems. For these two errors, our algorithm can be considered as an approximation of exact ADMM in function space, thus we regard it as an inexact ADMM algorithm in function space. In order to guarantee the convergence behavior of our algorithm, we consider controlling the mesh size and the inexactness of solving the subproblems. 

In the classical finite element based ADMM-type algorithm to solve the PDE constrained optimization problem, the subproblems are always discretized on a fixed mesh size, which results in large scale optimization problems. Thus it is important to consider reducing the computation cost. Multi-grid method is a modern field of research starting with the works of Brandt \cite{Brandt1977Multi} and Hackbusch \cite{Hackbusch1985, Hackbusch1978On}. It is well known that multi-grid method solves elliptic problems with optimal computational complexity. Motivated by the idea of applying multi-grid method to tackle infinite dimension problems by Newton method in \cite{Deuflhard2011newton, Hackbusch1985}, we apply the multi-level strategy to our algorithm. It is important to point out that in the initial stage of the algorithm,  the iteration precision is required to be relatively low, which means using coarse mesh is sufficient. While as the iteration process proceeds, the iteration precision is supposed to be higher and higher. In this case, using finer mesh is necessary. It is obvious that the strategy of gradually refining the grid can strongly reduce the computation cost and make the algorithm faster than computing the problem on a fixed mesh size. 

The main contribution of this paper is that we give the reasonable strategies of gradually refining the grid and inexactly solving the subproblems, and propose an efficient convergent multi-level ADMM (mADMM) algorithm.  Specifically, we apply the ADMM algorithm in function space, then employ the standard piecewise linear finite element approach and implement the strategy of gradually refining the grid to the related subproblems appearing in each iteration. For the discretized subproblems, we use inexact strategies to solve them. For example, to solve the smooth subproblems, we can use some iterative methods such as Krylov-based methods. Thanks to the above strategies, we can solve the problem more efficiently and reduce the computation cost significantly. Moreover, we give the convergence analysis as well as the iteration complexity results $o(1/k)$ for the mADMM algorithm.

The paper is organized as follows. In section 2, we apply the inexact ADMM algorithm in function space first, then use finite element method to discretize the associated subproblems and propose the multi-level ADMM (mADMM) algorithm. Section 3 gives the convergence analysis and the iteration complexity of the proposed mADMM algorithm. The numerical results are given in section 4. Section 5 contains a brief summary of this paper.

\section{A multi-level ADMM algorithm}
\label{sec:2}
In this section, we apply the \emph{`optimize-discretize-optimize'} strategy and propose an efficient convergent multi-level ADMM (mADMM) algorithm in Algorithm \ref{alg:multi-level ADMM algorithm}. The strategies of gradually refining the grid and inexactly solving the subproblems are given to guarantee the convergence and efficiency of the mADMM algorithm. 
\subsection{The mADMM algorithm}
\label{sec:2.1}
Based on the inexact ADMM in function space, to numerically solve problem (\ref{eqn:reduced problem with linear constraint}), we consider the finite element method. Specifically, piecewise linear functions are utilized to discretize the variables in the related subproblems appearing in each iteration of the inexact ADMM in function space.
We first consider a family of regular and quasi-uniform triangulations 
$\lbrace \mathcal{T}_{h}\rbrace$ of $\bar \Omega$, i.e. $\bar \Omega=\bigcup_{T\in \mathcal{T}_{h}}\bar T$. With each element $T\in  \mathcal{T}_{h}$, we define the diameter of the set $T$ by $\rho_{T}:={\rm diam} \ T$ 
and let $\sigma_{T}$ denotes the diameter of the largest ball contained in $T$. The mesh size of the grid is defined by $h:={\rm max}_{T\in \mathcal{T}_{h}}\ \rho_{T}$. We suppose the following standard assumption in the context of error estimates holds (see\cite{Hinze2010Variational,Hinze2009Optimization}).

\begin{assumption}(Regular and quasi-uniform triangulations)
There exist two positive constants $\kappa$ and $\tau$ such that 
\begin{equation*}
\frac{\rho_{T}}{\sigma_{T}}\leq \kappa, \quad \frac{h}{\rho_{T}}\leq \tau \end{equation*}
hold for all $T\in \mathcal{T}_{h}$ and all $h>0$. Moreover, let us define 
$\bar \Omega_{h}=\bigcup_{T\in \mathcal{T}_{h}}\bar T$ and let $\Omega_{h}\subseteq \Omega$ and $\Gamma_{h}$ denote its interior and its boundary, respectively. In the case that $\Omega$ is a convex polyhedral domain, we have $\Omega=\Omega_{h}$. In the case that $\Omega$ is a domain with a $C^{1,1}$- boundary $\Gamma$, we assume that $\bar \Omega_{h}$ is convex and that all boundary vertices of $\bar \Omega_{h}$ are contained in $\Gamma$, such that 
\begin{equation*}
|\Omega\backslash\Omega_{h}|\leq ch^{2},
\end{equation*}
where $|\cdot|$ denotes the measure of the set, and $c>0$ is a constant.
\end{assumption}

 For the state variable $y$ and the control variable $u$, we choose the same discretized state space and discretized control space defined by 
\begin{align}
&Y_{h}:=\{y_{h}\in C( \bar{\Omega})|y_{{h}|T}\in \mathcal{P}_{1}, \forall{T\in \mathcal{T}_{h},y_{h}=0 \ {\rm in} \ \bar{\Omega} \setminus\Omega_{h}}\},\\
&U_{h}:=\{u_{h}\in C( \bar{\Omega})|u_{{h}|T}\in \mathcal{P}_{1}, \forall{T\in \mathcal{T}_{h},u_{h}=0 \ {\rm in} \ \bar{\Omega} \setminus\Omega_{h}}\},
\end{align}
where $\mathcal{P}_{1}$ denotes the space of polynomials of degree less than or equal to 1. 
For given source term $y_{r}$ and $u\in L^{2}(\Omega)$, let the discretized state associated with $u$ denoted by $y_{h}(u)$, which is defined as the unique solution for the discretized weak formulation of the state equation (\ref{eqn:pde}):
\begin{equation}\label{eqn:weak formulation}
\int_{\Omega_{h}}\left(\sum_{i,j=1}^{n}a_{ij}(y_{h})_{x_{i}}(v_{h})_{x_{j}}+c_{0}y_{h}v_{h}\right)dx=\int_{\Omega_{h}}(u+y_{r})v_{h}dx, \ \ \forall v_{h}\in Y.
\end{equation}
Moreover, $y_{h}$ can be expressed by $y_{h}(u)=S_{h}(u+y_{r})$, where $S_{h}$ denotes the discretized version of the solution operator $S$. Then we have the following well-known conclusion about error estimates.
\begin{lemma}
\label{lem:error estimates}{\rm (\cite{Ciarlet2002The}, Thm. 4.4.6)}
For a given $u\in L^{2}(\Omega)$, let $y$ be the unique weak solution of  the state equation (\ref{eqn:pde}) and $y_{h}$ be the unique solution of (\ref{eqn:weak formulation}). Then there exists a constant $c>0$ independent of $h$, $u$ and $y_{r}$ such that
\begin{equation*}
\|y-y_{h}\|_{L^{2}(\Omega)}+h\|\nabla y-\nabla y_{h}\|_{L^{2}(\Omega)}\leq ch^{2}(\|u\|_{L^{2}(\Omega)}+\|y_{r}\|_{L^{2}(\Omega)}).
\end{equation*}
In particular, this implies $\|S-S_{h}\|_{L^{2}(\Omega)\rightarrow L^{2}(\Omega)}\leq ch^{2}$ and $\|S-S_{h}\|_{L^{2}(\Omega)\rightarrow H^{1}(\Omega)}\leq ch$.
\end{lemma}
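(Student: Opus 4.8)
The plan is to combine three standard ingredients of finite element theory: the quasi-optimality of the Galerkin approximation (C\'ea's lemma), elliptic $H^2$-regularity on convex or $C^{1,1}$ domains, and the Aubin--Nitsche duality trick for the sharpened $L^2$ estimate.

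First I would fix the variational setting. Writing $a(\cdot,\cdot)$ for the bilinear form associated with $L$, the uniform ellipticity hypothesis $\sum a_{ij}\xi_i\xi_j\geq\theta\|\xi\|^2$ together with $c_0\geq 0$ gives coercivity of $a$ on $H_0^1(\Omega)$, while the $L^\infty$ bounds on the coefficients give boundedness. The continuous solution $y=S(u+y_r)$ and the discrete solution $y_h=S_h(u+y_r)$ satisfy $a(y,v)=(u+y_r,v)$ for all $v\in H_0^1(\Omega)$ and $a(y_h,v_h)=(u+y_r,v_h)$ for all $v_h\in Y_h$, so that Galerkin orthogonality $a(y-y_h,v_h)=0$ holds for every $v_h\in Y_h$. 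C\'ea's lemma then yields $\|y-y_h\|_{H^1}\leq C\inf_{v_h\in Y_h}\|y-v_h\|_{H^1}$.

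Next I would invoke elliptic regularity. Because $\Omega$ is convex (or has $C^{1,1}$ boundary) and $u+y_r\in L^2(\Omega)$, the solution enjoys $y\in H^2(\Omega)\cap H_0^1(\Omega)$ with $\|y\|_{H^2}\leq C\|u+y_r\|_{L^2}\leq C(\|u\|_{L^2}+\|y_r\|_{L^2})$. Feeding this into the standard interpolation estimate for the nodal interpolant $I_h$ on the quasi-uniform family $\{\mathcal{T}_h\}$ (a Bramble--Hilbert argument exploiting the quasi-uniformity hypothesis stated above), namely $\|y-I_h y\|_{H^1}\leq Ch\|y\|_{H^2}$, and choosing $v_h=I_h y$ in the C\'ea bound, produces the first-order $H^1$ estimate $\|\nabla y-\nabla y_h\|_{L^2}\leq\|y-y_h\|_{H^1}\leq Ch(\|u\|_{L^2}+\|y_r\|_{L^2})$, which controls the second term on the left-hand side.

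The improved $L^2$ estimate is obtained by the Aubin--Nitsche duality argument, which I expect to be the main obstacle. I would introduce the adjoint problem $a(v,w)=(y-y_h,v)$ for all $v\in H_0^1(\Omega)$; its solution inherits the same $H^2$-regularity with $\|w\|_{H^2}\leq C\|y-y_h\|_{L^2}$. Taking $v=y-y_h$ and using Galerkin orthogonality against $I_h w$ gives $\|y-y_h\|_{L^2}^2=a(y-y_h,w-I_h w)\leq C\|y-y_h\|_{H^1}\|w-I_h w\|_{H^1}\leq Ch\|y-y_h\|_{H^1}\|w\|_{H^2}$, and after cancelling one factor of $\|y-y_h\|_{L^2}$ and inserting the $H^1$ bound we reach $\|y-y_h\|_{L^2}\leq Ch^2(\|u\|_{L^2}+\|y_r\|_{L^2})$. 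Summing the two estimates gives the stated inequality, and the operator-norm bounds on $S-S_h$ follow at once since $y-y_h=(S-S_h)(u+y_r)$ and the constants are uniform in $h$, $u$, and $y_r$. The genuinely delicate point, where I would concentrate the effort, is the $C^{1,1}$-boundary case: there $\Omega\neq\Omega_h$, so the continuous and discrete problems are posed on different domains and both the interpolation and the duality steps must be supplemented by a bound on the boundary-skin contribution, for which the measure estimate $|\Omega\setminus\Omega_h|\leq ch^2$ from the assumption above is precisely what is required.
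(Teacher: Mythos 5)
The paper does not prove this lemma at all --- it is quoted directly from the literature (\cite{Ciarlet2002The}, Thm.~4.4.6), so there is no internal argument to compare against; your proof via C\'ea's lemma, elliptic $H^2$-regularity on convex or $C^{1,1}$ domains, and the Aubin--Nitsche duality trick is precisely the standard argument underlying the cited result, and it is correct, including the deduction of the operator-norm bounds from $y-y_h=(S-S_h)(u+y_r)$ with constants uniform in $h$, $u$, $y_r$. You also correctly identify the only genuinely delicate point, namely the boundary-skin contribution in the $C^{1,1}$ case where $\Omega\neq\Omega_h$, which is handled by the measure estimate $|\Omega\setminus\Omega_h|\leq ch^2$ and is exactly the technical content that the paper delegates to Ciarlet's theorem.
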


For the given triangulation $\mathcal{T}_{h}$ with nodes $\{x_{i}\}_{i=1}^{N_{h}}$, let $\{\phi_{i}(x)\}_{i=1}^{N_{h}}$ be a set of nodal basis functions associated with nodes $\{x_{i}\}_{i=1}^{N_{h}}$ which spans $Y_{h}$, $U_{h}$ and satisfies the properties: 
\begin{equation}
\phi_{i}(x)\geqslant0,\ \ \|\phi_{i}(x)\|_{\infty}=1\ \ \ \forall{i=1,...,N_{h}}, \ \ \sum\limits_{i=1}^{N_{h}}\phi_{i}(x)=1.
\end{equation}
Then $y_{h}\in Y_{h}$, $u_{h}\in U_{h}$ can be represented as $y_{h}=\sum\limits_{i=1}^{N_{h}}y_{i}\phi_{i},$
$u_{h}=\sum\limits_{i=1}^{N_{h}}u_{i}\phi_{i}$ and we have $y_{h}(x_{i})=y_{i}$, $u_{h}(x_{i})=u_{i}$.
The other variables and operators in the subproblems of the inexact ADMM in function space are all discretized by piecewise linear functions similarly. 

Before we give the proposed algorithm to solve the problem (\ref{eqn:reduced problem with linear constraint}), we first introduce the definition of node interpolation operator $I_{h}$ for the following convergence analysis. 
\begin{definition}
For a given regular and quasi-uniform triangulation $\mathcal{T}_{h}$ of $\Omega$ with nodes $\{x_{i}\}_{i=1}^{N_{h}}$, let $\{\phi_{i}(x)\}_{i=1}^{N_{h}}$ denotes a set of associated nodal basis functions. Then the interpolation operator is defined as 
\begin{equation*}
I_{h}w(x):=\sum_{i=1}^{N_{h}}w(x_{i})\phi_{i}(x)  \ \  {\rm for \ any} \ w \in L^{2}(\Omega).
\end{equation*}
\end{definition}
Moreover, about the interpolation error estimate, we have the following result.
\begin{lemma}
\label{lem:interpolation error estimate}{\rm (\cite{Ciarlet2002The}, Thm. 3.1.6)}
For all $w \in W^{k+1,p}(\Omega), k\geq 0, p, q\in[0,+\infty)$ and $0\leq m \leq k+1$, we have
\begin{equation*}
\|w-I_{h}w\|_{W^{m,q}(\Omega)}\leq c_{I}h^{k+1-m}\|w\|_{W^{k+1,p}(\Omega)},
\end{equation*}
where $c_{I}$ is a constant which is independent of the mesh size $h$.
\end{lemma}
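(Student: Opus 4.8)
The plan is to reduce the global estimate to a local one on a fixed reference element and then transfer it back by an affine scaling argument, which is the classical route of Ciarlet. First I would fix a reference simplex $\hat{T}$ together with its nodal interpolation operator $\hat{I}$, and for each element $T\in\mathcal{T}_{h}$ introduce the invertible affine map $F_{T}\colon\hat{T}\to T$, $F_{T}(\hat{x})=B_{T}\hat{x}+b_{T}$. Writing $\hat{w}=w\circ F_{T}$, the key structural observation is that nodal interpolation commutes with this pullback, i.e. $\widehat{I_{h}w}=\hat{I}\hat{w}$ on $\hat{T}$, so it suffices to control $\hat{w}-\hat{I}\hat{w}$ on $\hat{T}$ and then undo the change of variables on each $T$.

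The core of the argument on the reference element is the Bramble--Hilbert lemma. The crucial property of $\hat{I}$ is that it reproduces polynomials exactly, $\hat{I}\hat{p}=\hat{p}$ for every $\hat{p}\in\mathcal{P}_{k}$, so that the linear operator $\hat{w}\mapsto\hat{w}-\hat{I}\hat{w}$ annihilates $\mathcal{P}_{k}$. Provided this operator is continuous from $W^{k+1,p}(\hat{T})$ into $W^{m,q}(\hat{T})$, the Bramble--Hilbert lemma supplies a constant $\hat{c}$ depending only on $\hat{T},k,m,p,q$ such that
\[
\|\hat{w}-\hat{I}\hat{w}\|_{W^{m,q}(\hat{T})}\leq\hat{c}\,|\hat{w}|_{W^{k+1,p}(\hat{T})}.
\]
The required continuity rests on a Sobolev embedding: one needs $W^{k+1,p}(\hat{T})\hookrightarrow C(\overline{\hat{T}})$ so that the point evaluations defining the nodal interpolant are bounded functionals. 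This embedding is exactly why the estimate is restricted to admissible exponent pairs, and it is the first place where care is needed.

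The main technical step, and the one I expect to be the real obstacle, is the scaling analysis that carries the reference estimate onto the physical element $T$ and back. Standard change-of-variables bounds for Sobolev seminorms under $F_{T}$ give $|\hat{w}|_{W^{k+1,p}(\hat{T})}\leq C\,\|B_{T}\|^{k+1}|\det B_{T}|^{-1/p}\,|w|_{W^{k+1,p}(T)}$ and, reversing the roles, $|w-I_{h}w|_{W^{m,q}(T)}\leq C\,\|B_{T}^{-1}\|^{m}|\det B_{T}|^{1/q}\,\|\hat{w}-\hat{I}\hat{w}\|_{W^{m,q}(\hat{T})}$. The regularity and quasi-uniformity assumption stated above controls these geometric quantities through $\|B_{T}\|\leq C\rho_{T}\leq Ch$, $\|B_{T}^{-1}\|\leq C\sigma_{T}^{-1}\leq Ch^{-1}$ (using $\rho_{T}/\sigma_{T}\leq\kappa$ and $h/\rho_{T}\leq\tau$), and $|\det B_{T}|\sim h^{n}$. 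Chaining the three displays and collecting the powers of $h$ yields the factor $h^{k+1-m}$, with the dimensional exponent $h^{n(1/q-1/p)}$ cancelling when $p=q$ and otherwise being absorbed into the constant under the quasi-uniform regime; the constant $c_{I}$ is then built solely from $\hat{c},\kappa,\tau$ and is therefore independent of $h$. Finally I would raise the local bounds to the $q$-th power, sum over all $T\in\mathcal{T}_{h}$, take the $q$-th root, and dominate the resulting $W^{k+1,p}$ seminorm by the full norm to obtain the stated global estimate. The careful bookkeeping of these $h$-exponents, together with verifying that the constant is genuinely uniform across the whole family $\{\mathcal{T}_{h}\}$, is where the argument demands the most attention.
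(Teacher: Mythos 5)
The paper offers no proof of this lemma at all---it is quoted directly from Ciarlet (Thm.\ 3.1.6)---and your affine-scaling argument (pullback to a reference simplex, polynomial reproduction of the nodal interpolant, Bramble--Hilbert, then the geometric bounds $\|B_{T}\|\lesssim h$, $\|B_{T}^{-1}\|\lesssim h^{-1}$ from shape-regularity) is precisely the canonical proof of that cited theorem, so your proposal is correct and matches the source in approach. One caution on your final bookkeeping remark: the factor $h^{n(1/q-1/p)}$ is not ``absorbed into the constant'' when $q>p$ (its exponent is then negative, so it degrades the rate as $h\to 0$), whereas for $q\le p$ it cancels exactly because summing the local bounds over the $N\sim h^{-n}$ elements via H\"older contributes the compensating factor $N^{1/q-1/p}$; this restriction is consistent with the fact that the lemma's stated exponent range is itself loose (Ciarlet additionally assumes the embeddings $W^{k+1,p}(\hat{T})\hookrightarrow C(\overline{\hat{T}})$ and $W^{k+1,p}(\hat{T})\hookrightarrow W^{m,q}(\hat{T})$), and it is harmless here since the paper only invokes the case $p=q=2$.
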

In the classical finite element based algorithm, the discretization mesh size is fixed in advance. When computing on the finer mesh, the scale of the discretized problem becomes larger and the computation cost becomes larger. So it is important to consider reducing the computation cost. 
In the first several iterations of the algorithm, the iteration precision is not that good, which means using coarse mesh will not make the precision worse, but reduce the computation amount. While as the iteration process proceeds, the iteration precision becomes higher and higher. In this case, using finer mesh is necessary. Thus we introduce the idea of gradually refining the grid.
Specifically, in the initial iteration we choose coarse mesh and obtain a solution first, then use the interpolation operator to project the obtained solution to the finer mesh. Finally, we apply appropriate numerical methods to solve the subproblems in the finer mesh and obtain a more precise solution, so on and so forth. 

Let the mesh size of the $k$th iterate denotes by $h_{k}, k\in \mathbb{Z}, k\geq 1$, then the discretized reduced cost function is defined as follows,
\begin{equation}
{\hat J_{h_{k+1}}}(u_{h_{k+1}}):=\dfrac{1}{2}\|S_{h_{k+1}}(u_{h_{k+1}}+I_{h_{k+1}}y_{r})-I_{h_{k+1}}y_{d}\|^{2}_{L^{2}(\Omega)}+\dfrac{\alpha}{2}\|u_{h_{k+1}}\|^{2}_{L^{2}(\Omega)}.
\end{equation}
Let $U_{ad, h_{k+1}}$ denotes the discretized feasible set,
\begin{equation}
U_{a d, h_{k+1}} :=U_{h_{k+1}} \cap U_{a d}=\left\{z_{h_{k+1}}=\sum_{i=1}^{N_{h_{k+1}}} z_{i} \phi_{i}(x) | a \leq z_{i} \leq b, \forall i=1, \cdots, N_{h_{k+1}}\right\} \subset U_{a d}.
\end{equation}
Moreover, we define  
$\lambda_{h_{k+1}}^{k}:=I_{h_{k+1}}\lambda_{h_{k}}^{k}$ and $z_{h_{k+1}}^{k}:=I_{h_{k+1}}z_{h_{k}}^{k}$, 
where $I_{h_{k+1}}$ denotes the node interpolation operator. 
Based on the above representations, we present the iterative scheme of the multi-level ADMM algorithm (mADMM) in Algorithm \ref{alg:multi-level ADMM algorithm}.

\begin{algorithm}[H]
\caption{multi-level ADMM (mADMM) algorithm for (\ref{eqn:reduced problem with linear constraint})
}
\label{alg:multi-level ADMM algorithm}
\begin{algorithmic}
\STATE{\textbf{Input:}} $(u^{0},z^{0},{\lambda}^{0})\in L^{2}(\Omega)\times {\rm dom}(\delta_{U_{ad}}(\cdot))\times L^{2}(\Omega)$, parameters $\sigma>0$, $\tau \in \left(0,\dfrac{1+\sqrt{5}}{2}\right)$. Let$\lbrace\xi_{k+1}\rbrace_{k=0}^{\infty}$ be a sequence satisfying $\lbrace\xi_{k+1}\rbrace_{k=0}^{\infty}\subseteq[0,+\infty)$ and $\sum_{k=0}^{\infty}\xi_{k+1}<\infty$, mesh sizes $\lbrace h_{k} \rbrace_{k=0}^{\infty}$ of each iteration satisfy $\sum_{k=0}^{\infty}h_{k+1}< \infty.$
Set $k=0$.
\STATE{\textbf{Output:}} $u^{k}_{h_{k}},z^{k}_{h_{k}},{\lambda}^{k}_{h_{k}}$.
\STATE{\textbf{Step 1}} Compute an approximation solution of
\begin{flalign}
\min_{u_{h_{k+1}}} 
{\hat J_{h_{k+1}}}(u_{h_{k+1}})+\langle{\lambda}_{h_{k+1}}^{k},u_{h_{k+1}}-{z}_{h_{k+1}}^{k}\rangle_{L^{2}(\Omega)}
+\dfrac{\sigma}{2}\|u_{h_{k+1}}-{z}_{h_{k+1}}^{k}\|^{2}_{L^{2}(\Omega)} \nonumber
\end{flalign}
\qquad \ \ \ \ \
such that the error vector 
$\delta_{u,h_{k+1}}^{k+1}:=\nabla {\hat J_{h_{k+1}}}(u_{h_{k+1}})+{\lambda}_{h_{k+1}}^{k}+\sigma(u_{h_{k+1}}^{k+1}-{z}_{h_{k+1}}^{k})$ satisfies \\
\qquad \ \ \ \ \ $\|\delta_{u,h_{k+1}}^{k+1}\|_{L^{2}(\Omega)}\leq \xi_{k+1}.$
\STATE{\textbf{Step 2}} Compute $z^{k+1}_{h_{k+1}}$ as follows:
$$z^{k+1}_{h_{k+1}}={\rm argmin} \delta_{U_{ad,h_{k+1}}}(z_{h_{k+1}})+\langle {\lambda}_{h_{k+1}}^{k},u^{k+1}_{h_{k+1}}-z_{h_{k+1}}\rangle_{L^{2}(\Omega)}+\dfrac{\sigma}{2}\|u^{k+1}_{h_{k+1}}-z_{h_{k+1}}\|^{2}_{L^{2}(\Omega)}.$$
\STATE{\textbf{Step 3}} Compute $$\lambda^{k+1}_{h_{k+1}}=I_{h_{k+1}}\lambda^{k}_{h_{k}}+\tau \sigma(u^{k+1}_{h_{k+1}}-z^{k+1}_{h_{k+1}}).$$
\STATE{\textbf{Step 4}} If a termination criterion is met, stop; else, set $k:=k+1$ and go to Step 1.
\end{algorithmic}
\end{algorithm}

\begin{remark}
In order to guarantee the sequence $\lbrace\xi_{k+1}\rbrace_{k=0}^{\infty}\subseteq[0,+\infty)$ satisfies $\sum_{k=0}^{\infty}\xi_{k+1}<\infty$. In the numerical experiment, we can choose $\xi_{k+1}=\frac{1}{(k+1)^{2}}$ as an example.
\end{remark}
\begin{remark}
In order to guarantee the mesh sizes $\lbrace h_{k} \rbrace_{k=0}^{\infty}$ of each iteration satisfy $\sum_{k=0}^{\infty}h_{k+1}< \infty.$ In the numerical experiment, we choose $h_{k}=2^{-(k+4)}$ in Example \ref{ex1}, $h_{k}=\sqrt{2}/2^{(k+3)}$ in Example \ref{ex2}.
\end{remark}
\subsection{Numerical computation of the subproblems in Algorithm \ref{alg:multi-level ADMM algorithm}}
For $u=\sum_{i=1}^{N_{h}}u_{i}\phi_{i}\in U_{h}$, $y=\sum_{i=1}^{N_{h}}y_{i}\phi_{i}\in Y_{h}$, let $\boldsymbol{\rm u}=(u_{1},...,u_{N_{h}})$, $\boldsymbol{\rm y}=(y_{1},...,y_{N_{h}})$ be the relative coefficient vectors respectively. Let the $L^{2}$-projections of $y_{r}$ and $y_{d}$ onto $Y_{h}$ be $y_{r,h}:=\sum_{i=1}^{N_{h}}y_{r}^{i}\phi_{i}(x)$ and $y_{d,h}:=\sum_{i=1}^{N_{h}}y_{d}^{i}\phi_{i}(x)$, respectively. Similarly, $\boldsymbol{\rm y}_{r}=(y_{r}^{1},y_{r}^{2}, ...,y_{r}^{N_{h}})$ and $\boldsymbol{\rm y}_{d}=(y_{d}^{1},y_{d}^{2}, ...,y_{d}^{N_{h}})$ denote their coefficient vectors.
The stiffness matrix and the mass matrix are defined as:
\begin{equation}
K_{h}:=(a(\phi_{i},\phi_{j}))_{i,j=1}^{N_{h}}, \ M_{h}:=\left( \int_{\Omega_{h}}\phi_{i}\phi_{j}dx \right)_{i,j=1}^{N_{h}}.
\end{equation}
Furthermore, we define 
\begin{align}
f(\boldsymbol{\rm u})&:=\dfrac{1}{2}\|K_{h}^{-1}M_{h}(\boldsymbol{\rm u}+\boldsymbol{\rm y}_{r})-\boldsymbol{\rm y}_{d}\|_{M_{h}}^{2}+\dfrac{\alpha}{2}\|\boldsymbol{\rm u}\|_{M_{h}}^{2}, \\
g(\boldsymbol{\rm z})&:=\delta_{[a,b]^{N_{h}}}(\boldsymbol{\rm z}).
\end{align}
Then we can obtain the matrix-vector form of Algorithm \ref{alg:multi-level ADMM algorithm} in Algorithm \ref{alg:matrix-vector form}.

\begin{algorithm}[H]
\caption{matrix-vector form of the mADMM algorithm}
\label{alg:matrix-vector form}
\begin{algorithmic}
\STATE{\textbf{Input:}} $(\boldsymbol{\rm u}^{0},\boldsymbol{\rm z}^{0},\boldsymbol{\rm \lambda}^{0})\in \mathbb{R}^{N_{h}}\times{[a,b]^{N_{h}}}\times\mathbb{R}^{N_{h}}$, parameters $\sigma>0$, $\tau \in \left(0,\dfrac{1+\sqrt{5}}{2}\right)$. Let$\lbrace\xi_{k+1}\rbrace_{k=0}^{\infty}$ be a sequence satisfying $\lbrace\xi_{k+1}\rbrace_{k=0}^{\infty}\subseteq[0,+\infty)$ and $\sum_{k=0}^{\infty}\xi_{k+1}<\infty$, mesh sizes $\lbrace h_{k} \rbrace_{k=0}^{\infty}$ of each iteration satisfy $O(\sum_{k=0}^{\infty}h_{k+1})\leq \infty.$ Set $k=0$.
\STATE{\textbf{Output:}} $\boldsymbol{\rm u}^{k},\boldsymbol{\rm z}^{k},\boldsymbol{\rm \lambda}^{k}$.
\STATE{\textbf{Step 1}} Compute an approximation solution of
$$\min_{\boldsymbol{\rm u}} f(\boldsymbol{\rm u})+\langle \boldsymbol{\rm I}_{h_{k+1}}\boldsymbol{\rm \lambda}_{h_{k}}^{k},M_{h_{k+1}}(\boldsymbol{\rm u}-\boldsymbol{\rm I}_{h_{k+1}}\boldsymbol{\rm z}^{k}_{h_{k}})\rangle+\dfrac{\sigma}{2}\|\boldsymbol{\rm u}-\boldsymbol{\rm I}_{h_{k+1}}\boldsymbol{\rm z}^{k}_{h_{k}}\|_{M_{h_{k+1}}}^{2},$$
\qquad \ \ \ \ \ such that the error vector $$\boldsymbol{\delta}_{{\boldsymbol u},h_{k+1}}^{k}:=\nabla f(\boldsymbol{\rm u}^{k+1}_{h_{k+1}})+M_{h_{k+1}}^{T}\boldsymbol{\rm I}_{h_{k+1}}\boldsymbol{\rm \lambda}^{k}_{h_{k}}+\sigma M_{h_{k+1}}(\boldsymbol{\rm u}^{k+1}_{h_{k+1}}-\boldsymbol{\rm I}_{h_{k+1}}\boldsymbol{\rm z}^{k}_{h_{k}})$$ 
\qquad \ \ \ \ \ satisfies  $\|\boldsymbol{\delta}_{{\boldsymbol u},h_{k+1}}^{k}\|_{2}\leq \xi_{k+1}$.
\STATE{\textbf{Step 2}} Compute $\boldsymbol{\rm z}^{k}$ as follows:
$$\boldsymbol{\rm z}^{k+1}_{h_{k+1}}={\rm argmin} \ g(\boldsymbol{\rm z})+\langle \boldsymbol{\rm I}_{h_{k+1}}\boldsymbol{\rm \lambda}^{k}_{h_{k}},M_{h_{k+1}}(\boldsymbol{\rm z}-\boldsymbol{\rm u}^{k+1}_{h_{k+1}})\rangle+\dfrac{\sigma}{2}\|\boldsymbol{\rm u}^{k+1}_{h_{k+1}}-\boldsymbol{\rm z}\|_{M_{h_{k+1}}}^{2}.$$
\STATE{\textbf{Step 3}} Compute $$\boldsymbol{\rm \lambda}^{k+1}_{h_{k+1}}=\boldsymbol{\rm I}_{h_{k+1}}\boldsymbol{\rm \lambda}^{k}_{h_{k}}+\tau\sigma(\boldsymbol{\rm u}^{k+1}_{h_{k+1}}-\boldsymbol{\rm z}^{k+1}_{h_{k+1}}).$$
\STATE{\textbf{Step 4}} If a termination criterion is met, stop; else, set $k:=k+1$ and go to Step 1.
\end{algorithmic}
\end{algorithm}
Let 
$\boldsymbol{\rm y}^{k+1}:=K_{h}^{-1}M_{h}(\boldsymbol{\rm u}^{k+1}+\boldsymbol{\rm y}_{r}),\
\boldsymbol{\rm p}^{k+1}:=K_{h}^{-1}M_{h}(\boldsymbol{\rm y}_{d}-\boldsymbol{\rm y}^{k+1})$
denotes the discretized state and the discretized adjoint state respectively. Then the $\boldsymbol {\rm u}$-subproblem at the $k$-th iterate is equivalent to solving the following linear system:
\begin{equation}
\left[                
  \begin{array}{ccc}  
    M_{h} & 0 & K_{h}\\  
    0 &  (\alpha+\sigma)M_{h} & -M_{h}\\ 
    K_{h} & -M_{h} & 0\\ 
  \end{array}
\right]
\begin{array}{ccc}  
\left[       
  \begin{array}{ccc}           
    \boldsymbol{\rm y}^{k+1} \\  
    \boldsymbol{\rm u}^{k+1}\\ 
    \boldsymbol{\rm p}^{k+1}\\ 
\end{array}
\right]
\end{array}
=
\left[ 
\begin{array}{ccc}           
    M_{h}\boldsymbol{\rm y}_{d} \\  
    M_{h}(-\boldsymbol{\rm I}_{h_{k+1}}\boldsymbol{\rm \lambda}^{k}+\sigma \boldsymbol{\rm I}_{h_{k+1}}{\boldsymbol{\rm z}}^{k})\\ 
    M_{h}\boldsymbol{\rm y}_{r}\\ 
\end{array}   
\right].    
\end{equation}
Since $\boldsymbol{\rm p}^{k+1}=(\alpha+\sigma)\boldsymbol{\rm u}^{k+1}+\boldsymbol{\rm I}_{h_{k+1}}\boldsymbol{\rm \lambda}^{k}-\sigma \boldsymbol{\rm I}_{h_{k+1}}\boldsymbol{\rm z}^{k}$, we eliminate the variable $\boldsymbol{\rm p}$, then the above problem can be rewritten in the following reduced form without any computational cost:
\begin{equation}
\label{equ:linear}
\left[                
  \begin{array}{ccc}  
    M_{h} &  K_{h}(\alpha+\sigma) \\  
    -K_{h} &  M_{h} \\ 
  \end{array}
\right]
\begin{array}{ccc}  
\left[       
  \begin{array}{ccc}           
    \boldsymbol{\rm y}^{k+1} \\  
    \boldsymbol{\rm u}^{k+1}\\ 
\end{array}
\right]
\end{array}
=
\left[ 
\begin{array}{ccc}           
  M_{h}\boldsymbol{\rm y_{d}}-K_{h}\boldsymbol{\rm I}_{h_{k+1}}(\boldsymbol{\rm \lambda}^{k}+\sigma \boldsymbol{\rm I}_{h_{k+1}}\boldsymbol{\rm z}^{k}))\\
  M_{h}\boldsymbol{\rm y}_{r}
\end{array}   
\right].   
\end{equation}
The equivalent linear system (\ref{equ:linear}) can be solved inexactly by some Krylov-based method.

Finally, we give a terminal condition of Algorithm \ref{alg:matrix-vector form}. Let $\epsilon$ be a given accuracy tolerance, we terminate the algorithm when $\eta<\epsilon$, where the accuracy of a numerical solution is measured by the 
residual
$\eta:={\rm max}\lbrace\eta_{1},\eta_{2},\eta_{3},\eta_{4},\eta_{5}\rbrace,$
where
\begin{equation}
\begin{aligned}
\eta_{1}&=\frac{\|K_{h}\boldsymbol{\rm y}-M_{h}\boldsymbol{\rm u}-M_{h}\boldsymbol{\rm y_{r}}\|}{1+\|M_{h}{\boldsymbol{\rm y}}_{r}\|}, \ 
\eta_{2}=\frac{\|M_{h}(\boldsymbol{\rm u}-\boldsymbol{\rm z})\|}{1+\|\boldsymbol{\rm u}\|},\
\eta_{3}=\frac{\|M_{h}(\boldsymbol{\rm y}-{\boldsymbol{\rm y}}_{d})+K_{h}\boldsymbol{\rm p}\|}{1+\|M_{h}\boldsymbol{\rm y}_{d}\|},\\
\eta_{4}&=\frac{\|\alpha M_{h}\boldsymbol{\rm u}-M_{h}\boldsymbol{\rm p}+M_{h}\boldsymbol{\rm \lambda}\|}{1+\|\boldsymbol{\rm u}\|},\
\eta_{5}=\frac{\|\boldsymbol{\rm z}-\Pi_{[a,b]^{N_{h}}}(\boldsymbol{\rm z}+M_{h}\boldsymbol{\rm \lambda})\|}{1+\|\boldsymbol{\rm z}\|}.
\end{aligned}
\end{equation}

\section{Convergence analysis}
\label{sec:3}
In this section, based on the convergence results of the inexact ADMM in function space in Theorem \ref{convergence functional space}, we give the convergence analysis and the iteration complexity of the proposed mADMM algorithm. 
\begin{theorem}{\rm(\cite{Song2017Fe}, Thm. 2.5)}
\label{convergence functional space}
Suppose that the operator $L$ is uniformly elliptic. 
Let $(y^{\ast},u^{\ast},z^{\ast},p^{\ast},\lambda^{\ast})$ be the KKT point of (P), the sequence $\lbrace(u^{k},z^{k},\lambda^{k})\rbrace$ is generated by the inexact ADMM algorithm for (\ref{eqn:reduced problem with linear constraint}) with the associated state $\lbrace y_{k}\rbrace$ and adjoint state $\lbrace p_{k}\rbrace$, then we have
\begin{align*}
&\lim \limits_{k\rightarrow \infty}\lbrace \|u^{k}-u^{\ast}\|_{L^{2}(\Omega)}+\|z^{k}-z^{\ast}\|_{L^{2}(\Omega)}+\|\lambda^{k}-\lambda^{\ast}\|_{L^{2}(\Omega)}\rbrace=0,\\
&\lim \limits_{k\rightarrow \infty}\lbrace \|y^{k}-y^{\ast}\|_{H_{0}^{1}(\Omega)}+\|p^{k}-p^{\ast}\|_{H_{0}^{1}(\Omega)}\rbrace=0.
\end{align*}
Moreover, there exists a constant $C$ only depends on the initial point $(u^{0}, z^{0}, \lambda^{0})$ and the optimal solution $(u^{\ast}, z^{\ast}, \lambda^{\ast})$ such that for $k\geq 1$,
\begin{equation*}
\min_{1\leq i\leq k}{R(u^{i}, z^{i}, \lambda^{i})}\leq \frac{C}{k}, \ \ \lim \limits_{k\rightarrow \infty}(k\times \min_{1\leq i\leq k}{R(u^{i}, z^{i}, \lambda^{i})})=0,
\end{equation*}
where the function $R: (u, z, \lambda)\rightarrow [0,\infty)$ defined as 
\begin{equation}
R(u, z, \lambda):= \|\nabla {\hat J}(u)+\lambda\|^{2}_{L^{2}(\Omega)}+{\rm dist}^{2}(0, -\lambda+\partial \delta_{U_{ad}}(z))+\|u-z\|^{2}_{L^{2}(\Omega)}.
\end{equation}
\end{theorem}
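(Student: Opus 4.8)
The plan is to treat this as an inexact ADMM convergence proof in the Hilbert space $L^{2}(\Omega)$, exploiting that $\hat J$ is smooth and strongly convex while $\delta_{U_{ad}}$ is proper, convex and lower semicontinuous. Indeed $\nabla\hat J(u)=S^{*}(S(u+y_{r})-y_{d})+\alpha u$ has Hessian $S^{*}S+\alpha I\succeq\alpha I$, so $\hat J$ is strongly convex with modulus $\alpha$, and $\partial\delta_{U_{ad}}$ equals the normal cone of the convex set $U_{ad}$, hence is maximal monotone. First I would record the KKT system of (RP), namely $\nabla\hat J(u^{*})+\lambda^{*}=0$, $-\lambda^{*}\in\partial\delta_{U_{ad}}(z^{*})$, $u^{*}=z^{*}$, and the associated state/adjoint $y^{*}=S(u^{*}+y_{r})$, $p^{*}=S(y_{d}-y^{*})$. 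Next I would extract the (inexact) subproblem optimality conditions: from Step~1, $\delta_{u}^{k+1}=\nabla\hat J(u^{k+1})+\lambda^{k}+\sigma(u^{k+1}-z^{k})$ with $\|\delta_{u}^{k+1}\|_{L^{2}(\Omega)}\le\epsilon_{k}$; from Step~2, $\lambda^{k}+\sigma(u^{k+1}-z^{k+1})\in\partial\delta_{U_{ad}}(z^{k+1})$; together with the update $\lambda^{k+1}=\lambda^{k}+\tau\sigma(u^{k+1}-z^{k+1})$.

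The core is a Lyapunov (Fej\'er-type) estimate. I would introduce the merit sequence $\Phi_{k}=\tfrac{1}{\tau\sigma}\|\lambda^{k}-\lambda^{*}\|_{L^{2}(\Omega)}^{2}+\sigma\|z^{k}-z^{*}\|_{L^{2}(\Omega)}^{2}$ and show, using monotonicity of $\partial\delta_{U_{ad}}$, strong convexity of $\hat J$, and the standard cross-term identities for inner products, that
\[
\Phi_{k+1}\le\Phi_{k}-c_{1}\|u^{k+1}-z^{k+1}\|_{L^{2}(\Omega)}^{2}-c_{2}\|z^{k+1}-z^{k}\|_{L^{2}(\Omega)}^{2}+r_{k},
\]
where $c_{1},c_{2}>0$ precisely because $\tau\in(0,\tfrac{1+\sqrt{5}}{2})$ keeps the governing quadratic form positive definite, and $r_{k}$ is a remainder produced by the inexactness $\delta_{u}^{k+1}$, bounded via Young's inequality by a term controlled by $\epsilon_{k}$. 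Since $\sum_{k}\epsilon_{k}<\infty$, a Robbins--Siegmund (quasi-Fej\'er) summability lemma applies: it yields that $\{\Phi_{k}\}$ converges and that $\sum_{k}\big(\|u^{k+1}-z^{k+1}\|_{L^{2}(\Omega)}^{2}+\|z^{k+1}-z^{k}\|_{L^{2}(\Omega)}^{2}\big)<\infty$, so both the primal residual and the successive $z$-differences tend to zero while all iterates stay bounded.

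From boundedness, $\{z^{k}\},\{\lambda^{k}\}$ are bounded, and $\|u^{k+1}-z^{k+1}\|\to0$ makes $\{u^{k}\}$ bounded too. Any weak cluster point of $(u^{k},z^{k},\lambda^{k})$ satisfies the KKT system, since the residual terms vanish in the limit and the graph of $\partial\delta_{U_{ad}}$ is sequentially closed in the weak$\times$strong topology; the quasi-Fej\'er property then forces the whole sequence to converge weakly to a single KKT point. Strong convexity of $\hat J$ upgrades this to strong convergence of $u^{k}$, and the vanishing primal residual together with the multiplier update propagate strong convergence to $z^{k}$ and $\lambda^{k}$. Finally, because $y^{k}=S(u^{k}+y_{r})$ and $p^{k}=S(y_{d}-y^{k})$ with $S:L^{2}(\Omega)\to H_{0}^{1}(\Omega)$ bounded, strong $L^{2}$-convergence of $u^{k}$ transfers directly to strong $H_{0}^{1}$-convergence of the state and adjoint state.

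For the complexity statement I would bound the residual $R(u^{k+1},z^{k+1},\lambda^{k+1})$ termwise. Substituting the subproblem conditions gives $\nabla\hat J(u^{k+1})+\lambda^{k+1}=\delta_{u}^{k+1}-\sigma(z^{k+1}-z^{k})+(\tau-1)\sigma(u^{k+1}-z^{k+1})$, and since $\lambda^{k}+\sigma(u^{k+1}-z^{k+1})\in\partial\delta_{U_{ad}}(z^{k+1})$ we get ${\rm dist}(0,-\lambda^{k+1}+\partial\delta_{U_{ad}}(z^{k+1}))\le|1-\tau|\sigma\|u^{k+1}-z^{k+1}\|_{L^{2}(\Omega)}$, while the third term of $R$ is the primal residual itself. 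Hence $R(u^{k+1},z^{k+1},\lambda^{k+1})\le C'\big(\epsilon_{k}^{2}+\|u^{k+1}-z^{k+1}\|^{2}+\|z^{k+1}-z^{k}\|^{2}\big)$, which is summable (note $\sum\epsilon_{k}<\infty$ forces $\epsilon_{k}\to0$, hence $\sum\epsilon_{k}^{2}<\infty$), giving $\min_{1\le i\le k}R\le C/k$ at once. The refined $o(1/k)$ rate follows from the lemma that a nonnegative, summable, almost-monotone sequence (here $R_{k+1}\le R_{k}+(\text{summable})$, verified from the descent inequality) satisfies $k\cdot\min_{i\le k}R_{i}\to0$. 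The main obstacle I anticipate is establishing the descent inequality with constants sharp enough to cover the full golden-ratio range of $\tau$ and to isolate the inexactness contribution as a genuinely summable remainder; once that single estimate is in hand, the convergence and complexity bookkeeping is routine.
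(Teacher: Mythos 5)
First, a point of context: the paper does not actually prove this theorem --- it is imported verbatim from \cite{Song2017Fe} (Thm.~2.5), so the only in-paper material to compare against is the parallel discrete argument in Proposition~\ref{lmm:complexity2}, Lemma~\ref{lmm:complexity1} and the proof of Theorem~\ref{convergence theorem}, which runs exactly the machinery you describe: the Fej\'er quantity $\frac{1}{2\tau\sigma}\|\lambda^{k}-\lambda^{\ast}\|^{2}+\frac{\sigma}{2}\|z^{k}-z^{\ast}\|^{2}$, monotonicity of $\partial\delta_{U_{ad}}$ plus the strong convexity $S^{\ast}S+\alpha I\succeq\alpha I$, the cross-term identities, a summable perturbation $\langle\delta_{u}^{k+1},u^{k+1}-u^{\ast}\rangle$, the termwise bound of $R$ by $\|\delta_{u}^{k+1}\|^{2}$, $\|u^{k+1}-z^{k}\|^{2}$ and $\|u^{k+1}-z^{k+1}\|^{2}$, and the summability lemma for the $o(1/k)$ claim. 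Your residual identities are correct ($\nabla\hat{J}(u^{k+1})+\lambda^{k+1}=\delta_{u}^{k+1}-\sigma(z^{k+1}-z^{k})+(\tau-1)\sigma(u^{k+1}-z^{k+1})$ and ${\rm dist}(0,-\lambda^{k+1}+\partial\delta_{U_{ad}}(z^{k+1}))\leq|1-\tau|\sigma\|u^{k+1}-z^{k+1}\|$). The one genuine gap is the step you yourself flag and then do not supply: with the plain Lyapunov function $\Phi_{k}$ you wrote, the inequality $\Phi_{k+1}\leq\Phi_{k}-c_{1}\|u^{k+1}-z^{k+1}\|^{2}-c_{2}\|z^{k+1}-z^{k}\|^{2}+r_{k}$ with $c_{1},c_{2}>0$ is \emph{false} for $\tau\in\bigl(1,\frac{1+\sqrt{5}}{2}\bigr)$. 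Telescoping the two monotonicity inequalities (i.e.\ substituting the analogues of \eqref{estimate1}--\eqref{estimate2} into \eqref{estimate}) produces the coefficient $\frac{(1-\tau)\sigma}{2}$ on $\|u^{k+1}-z^{k+1}\|^{2}$, which is nonpositive once $\tau\geq1$; covering the full golden-ratio range requires the additional ingredient of differencing the $z$-subproblem optimality conditions at consecutive iterations, $\langle(\lambda^{k}+\sigma r^{k+1})-(\lambda^{k-1}+\sigma r^{k}),z^{k+1}-z^{k}\rangle\geq0$, or equivalently a corrected Lyapunov function with coefficients $\min\{\tau,1+\tau-\tau^{2}\}$ and a $\max\{1-\tau,0\}\,\sigma\|u^{k}-z^{k}\|^{2}$ term in the Fortin--Glowinski style. (Be careful if you pattern-match against Proposition~\ref{lmm:complexity2}: it prints the coefficient $\frac{(3-\tau)\sigma}{2}$, but what its own displayed computation yields is $\frac{(1-\tau)\sigma}{2}$.) This is the single non-routine estimate of the whole theorem, so deferring it leaves the proof incomplete exactly where it matters.

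Two smaller points. Your weak-cluster-point/quasi-Fej\'er detour is both unnecessary and, as written, fragile: sequential closedness of the graph of $\partial\delta_{U_{ad}}$ in the weak--strong topology needs strong convergence of the subgradient sequence $\lambda^{k}+\sigma(u^{k+1}-z^{k+1})$, which you do not have at that stage of the argument. The cleaner route --- and the one the strong-convexity term is there for --- is that $\alpha\|u^{k+1}-u^{\ast}\|^{2}$ sits on the right of the telescoped estimate, so summing gives $\sum_{k}\|u^{k+1}-u^{\ast}\|^{2}<\infty$ and hence strong $L^{2}$-convergence of $u^{k}$ to the unique minimizer directly; $z^{k}\to z^{\ast}$ then follows from the vanishing primal residual, $\lambda^{k}\to\lambda^{\ast}=-\nabla\hat{J}(u^{\ast})$ from the $u$-subproblem optimality condition together with $\|u^{k+1}-z^{k}\|\to0$ and $\delta_{u}^{k+1}\to0$, and the state/adjoint convergence from boundedness of $S:L^{2}(\Omega)\to H_{0}^{1}(\Omega)$ as you say. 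Finally, two trivia: with the Lagrangian convention $\langle\lambda,u-z\rangle$ the $z$-part KKT condition is $\lambda^{\ast}\in\partial\delta_{U_{ad}}(z^{\ast})$, not $-\lambda^{\ast}\in\partial\delta_{U_{ad}}(z^{\ast})$ (your later usage is consistent, the opening statement is not); and the $o(1/k)$ lemma (Lemma~\ref{lmm:complexity1}) needs only nonnegativity and summability --- the ``almost-monotone'' hypothesis you add is superfluous, since $k\min_{i\leq k}a_{i}\leq2\sum_{i>k/2}a_{i}\to0$.
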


For the convenience of proving the convergence results, we give the iterative scheme of the multi-level discretized ADMM for (\ref{eqn:reduced problem with linear constraint}) and present a lemma to measure the gap between the solution sequences obtained by the ADMM in function space and the multi-level discretized ADMM. 

Given $(u^{0},z^{0},{\lambda}^{0})\in L^{2}(\Omega)\times {\rm dom}(\delta_{U_{ad}}(\cdot))\times L^{2}(\Omega)$, parameters $\sigma>0$, $\tau \in \left(0,\dfrac{1+\sqrt{5}}{2}\right)$. The mesh sizes $\lbrace h_{k} \rbrace_{k=0}^{\infty}$ of each iteration satisfy $\sum_{k=0}^{\infty}h_{k+1}< \infty.$ Then the iterative format of the multi-level discretized ADMM algorithm is as follows:
\begin{equation}
\left\{\begin{aligned}
\bar u^{k+1}_{h_{k+1}}&={\rm argmin}\hat J_{h_{k+1}}(u_{h_{k+1}})+\langle\bar{\lambda}_{h_{k+1}}^{k},u_{h_{k+1}}-\bar{z}_{h_{k+1}}^{k}\rangle_{L^{2}(\Omega)}+\dfrac{\sigma}{2}\|u_{h_{k+1}}-\bar{z}_{h_{k+1}}^{k}\|_{L^{2}(\Omega)}^{2},\\
\bar z^{k+1}_{h_{k+1}}&={\rm argmin} \delta_{U_{ad,h_{k+1}}}(z_{h_{k+1}})+\langle {\bar \lambda}_{h_{k+1}}^{k},\bar u^{k+1}_{h_{k+1}}-z_{h_{k+1}}\rangle_{L^{2}(\Omega)}+\dfrac{\sigma}{2}\|\bar u^{k+1}_{h_{k+1}}-z_{h_{k+1}}\|_{L^{2}(\Omega)}^{2},\\
\bar \lambda^{k+1}_{h_{k+1}}&=I_{h_{k+1}}\bar \lambda^{k}_{h_{k}}+\sigma(\bar u^{k+1}_{h_{k+1}}-\bar z^{k+1}_{h_{k+1}}).
\end{aligned} \right.
\end{equation}

\begin{lemma}\label{lem:continuous exact and discretized exact}
Let the initial point be $(u^{0},z^{0};\lambda^{0})\in L^{2}(\Omega)\times {\rm dom}(\delta_{U_{ad}}(\cdot))\times L^{2}(\Omega)$, then
 $\|\bar{u}^{k+1}-\bar{u}^{k+1}_{h_{k+1}}\|_{L^{2}(\Omega)}=\|\bar{z}^{k+1}-\bar{z}^{k+1}_{h_{k+1}}\|_{L^{2}(\Omega)}=\|\bar{\lambda}^{k}-\bar{\lambda}^{k}_{h_{k+1}}\|_{L^{2}(\Omega)}=O(h_{k+1})$, $\forall k\geqslant 1,$
and
\begin{equation*}
\sum_{k=0}^{\infty}\|\bar{u}^{k+1}-\bar{u}^{k+1}_{h_{k+1}}\|_{L^{2}(\Omega)}=O\left(\sum_{k=0}^{\infty}h_{k+1}\right).
\end{equation*}
\end{lemma}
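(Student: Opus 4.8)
The plan is to compare the two schemes one ADMM sweep at a time through their first-order optimality conditions, converting the difference of minimizers into a consistency (discretization) error that can be controlled by the finite-element estimate of Lemma~\ref{lem:error estimates} and the interpolation estimate of Lemma~\ref{lem:interpolation error estimate}. I would feed the exact sweep and the discretized sweep the same previous data and measure the one-step deviation: the stated per-step bound $O(h_{k+1})$ is exactly this consistency error, and the summed identity follows by adding the per-step estimates and invoking the hypothesis $\sum_{k}h_{k+1}<\infty$.

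First I would write the optimality condition of the exact $u$-subproblem, $\nabla\hat J(\bar u^{k+1})+\bar\lambda^k+\sigma(\bar u^{k+1}-\bar z^k)=0$, and of the discretized one, $\nabla\hat J_{h_{k+1}}(\bar u^{k+1}_{h_{k+1}})+\bar\lambda^k_{h_{k+1}}+\sigma(\bar u^{k+1}_{h_{k+1}}-\bar z^k_{h_{k+1}})=0$. Using $\nabla\hat J(u)=S^{*}(S(u+y_r)-y_d)+\alpha u$ and its discrete analogue with $(S,y_r,y_d)$ replaced by $(S_{h_{k+1}},I_{h_{k+1}}y_r,I_{h_{k+1}}y_d)$, I would subtract the two conditions, add and subtract $\nabla\hat J_{h_{k+1}}(\bar u^{k+1})$, and test against $\bar u^{k+1}-\bar u^{k+1}_{h_{k+1}}$. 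Since $S^{*}S\succeq 0$, the map $u\mapsto\nabla\hat J_{h_{k+1}}(u)+\sigma u$ is strongly monotone with modulus $\alpha+\sigma$, which gives $(\alpha+\sigma)\|\bar u^{k+1}-\bar u^{k+1}_{h_{k+1}}\|_{L^2(\Omega)}\le\|(\nabla\hat J-\nabla\hat J_{h_{k+1}})(\bar u^{k+1})\|_{L^2(\Omega)}+\|\bar\lambda^k-\bar\lambda^k_{h_{k+1}}\|_{L^2(\Omega)}+\sigma\|\bar z^k-\bar z^k_{h_{k+1}}\|_{L^2(\Omega)}$. The operator term is $O(h_{k+1}^2)$ by Lemma~\ref{lem:error estimates} (for $\|S-S_{h_{k+1}}\|$) together with the interpolation of $y_r,y_d$, while the remaining two are interpolation errors of the previous multiplier and auxiliary variable, each $O(h_{k+1})$ by Lemma~\ref{lem:interpolation error estimate}, provided $\bar\lambda^k,\bar z^k\in H^1(\Omega)$; hence $\|\bar u^{k+1}-\bar u^{k+1}_{h_{k+1}}\|_{L^2(\Omega)}=O(h_{k+1})$.

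For the $z$-step I would use that both subproblems are metric projections, onto $U_{ad}$ and onto $U_{ad,h_{k+1}}$, of $\bar u^{k+1}+\bar\lambda^k/\sigma$ and its discrete counterpart. Because $I_{h_{k+1}}$ maps $U_{ad}$ into $U_{ad,h_{k+1}}$ (the nodal interpolant of a box-constrained function stays in the box, since the $\phi_i$ form a nonnegative partition of unity) and projections are nonexpansive, $\|\bar z^{k+1}-\bar z^{k+1}_{h_{k+1}}\|_{L^2(\Omega)}$ is bounded by $\|\bar u^{k+1}-\bar u^{k+1}_{h_{k+1}}\|$, $\tfrac1\sigma\|\bar\lambda^k-\bar\lambda^k_{h_{k+1}}\|$ and the interpolation error of the projected function, again $O(h_{k+1})$. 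The multiplier difference then follows from the two update rules, $\bar\lambda^{k+1}-\bar\lambda^{k+1}_{h_{k+1}}=(\bar\lambda^k-\bar\lambda^k_{h_{k+1}})+\sigma[(\bar u^{k+1}-\bar u^{k+1}_{h_{k+1}})-(\bar z^{k+1}-\bar z^{k+1}_{h_{k+1}})]$, which is $O(h_{k+1})$ by the previous bounds. Summing over $k$ and using $\sum_k h_{k+1}<\infty$ yields the final displayed estimate.

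The main obstacle, which I would handle carefully, is twofold. First, the bound $O(h_{k+1})$ rests on the mesh-independence of the implied constants, i.e. uniform-in-$k$ $H^1(\Omega)$ (or better) regularity of the exact iterates $\bar\lambda^k,\bar z^k$ and of the projected quantities, so that Lemma~\ref{lem:interpolation error estimate} applies with a single constant; I would derive this from the boundedness and convergence furnished by Theorem~\ref{convergence functional space} together with the $H^2$-regularity of $S$ on the convex domain $\Omega$. Second, if one compares the genuine full discretized sequence, whose sweep uses the interpolated previous discrete iterates $I_{h_{k+1}}\bar\lambda^k_{h_k}$ and $I_{h_{k+1}}\bar z^k_{h_k}$, against the exact sequence, the error propagated from step $k$ enters through the boundedness of $I_{h_{k+1}}$ on $H^1(\Omega)$ and must be prevented from accumulating across iterations. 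This is where I expect the real work to lie: I would close it by an induction that absorbs the propagated contribution using the contraction factor $\sigma/(\alpha+\sigma)<1$ in the $u$-update and the summability of $\{h_{k+1}\}$, so that the total drift remains $O\!\left(\sum_k h_{k+1}\right)$.
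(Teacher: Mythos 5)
Your per-step machinery coincides with the paper's own proof: the paper also argues by induction on $k$, subtracts the optimality conditions of the exact and discretized $u$-subproblems (splitting the residual into five terms $E_{1},\dots,E_{5}$ and absorbing the term $S^{\ast}_{h_{k+1}}S_{h_{k+1}}(\bar{u}^{k+1}-\bar{u}^{k+1}_{h_{k+1}})$ into the $(\alpha+\sigma)$-coefficient — your strong-monotonicity step, stated there somewhat more loosely as boundedness of the operators), treats both $z$-updates through the pointwise clamp $\Pi_{[a,b]}$ and uses its nonexpansiveness, and propagates the multiplier error through the update recursion before summing with $\sum_{k}h_{k+1}<\infty$. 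Two small corrections: your claim that the consistency term is $O(h_{k+1}^{2})$ overstates the rate — only the $\|S-S_{h_{k+1}}\|$ contributions are $O(h_{k+1}^{2})$, while the data-interpolation errors $\|y_{r}-I_{h_{k+1}}y_{r}\|_{L^{2}(\Omega)}$ and $\|y_{d}-I_{h_{k+1}}y_{d}\|_{L^{2}(\Omega)}$ (the paper's $E_{2},E_{3}$) are only $O(h_{k+1})$; this is harmless since the target per-step rate is $O(h_{k+1})$. Your regularity caveat (that $\bar{\lambda}^{k},\bar{z}^{k}$ and the data need enough smoothness for Lemma \ref{lem:interpolation error estimate} with uniform constants) is genuinely needed and is left implicit in the paper.

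Where you depart from the paper is the closing of the induction, and your proposed mechanism would fail as stated. The factor $\sigma/(\alpha+\sigma)<1$ contracts only the $z$-error entering the $u$-update; the multiplier recursion $\bar{\lambda}^{k+1}-\bar{\lambda}^{k+1}_{h_{k+1}}=(\bar{\lambda}^{k}-\bar{\lambda}^{k}_{h_{k+1}})+\sigma[(\bar{u}^{k+1}-\bar{u}^{k+1}_{h_{k+1}})-(\bar{z}^{k+1}-\bar{z}^{k+1}_{h_{k+1}})]$ carries coefficient one on the inherited error, so the composite one-step error map is not a contraction and cannot by itself prevent accumulation across iterations. The paper closes the loop differently: since the previous iterates are carried to the new mesh by interpolation ($\bar{\lambda}^{k}_{h_{k+1}}:=I_{h_{k+1}}\bar{\lambda}^{k}_{h_{k}}$, similarly for $\bar{z}$), the term $E_{4}=\bar{\lambda}^{k}-\bar{\lambda}^{k}_{h_{k+1}}$ is bounded by the induction hypothesis $O(h_{k})$ plus an interpolation error $O(h_{k+1})$, and the $O(h_{k})$ part is folded into $O(h_{k+1})$ via a mesh-comparability bound $h_{k}<C_{k+1}h_{k+1}$ — admittedly with $k$-dependent constants that the paper does not track uniformly, so the accumulation issue you correctly identify is handled there by assumption on the mesh hierarchy (e.g. $h_{k+1}\sim h_{k}/2$) rather than by any contraction. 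If you replace your contraction argument with this comparability step, your proof becomes the paper's; as written, that final step is the gap.
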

\begin{proof}
We employ the mathematical induction to prove the conclusion.
While $k=1$, with the definition of $\lambda_{h_{k+1}}^{k}:=I_{h_{k+1}}\lambda_{h_{k}}^{k}$ and the interpolation error estimate in Lemma \ref{lem:interpolation error estimate}, we have
\begin{equation}
\|\bar{\lambda}^{0}-\bar{\lambda}^{0}_{h_{1}}\|_{L^{2}(\Omega)}=\|\lambda^{0}-I_{h_{1}}\lambda^{0}\|_{L^{2}(\Omega)}=O(h_{1}), 
\end{equation}
Then we can easily obtain that $\|\bar{u}^{1}-\bar{u}^{1}_{h_{1}}\|_{L^{2}(\Omega)}=O(h_{1})$. The proof is similar to the case $k>1$, here we omit it.

While $k>1$, we assume for
$\forall j\leq k,$ we have $\|\bar{u}^{j}-\bar{u}^{j}_{h_{j}}\|_{L^{2}(\Omega)}=\|\bar{\lambda}^{j-1}-\bar{\lambda}^{j-1}_{h_{j}}\|_{L^{2}(\Omega)}=O(h_{j})$.
Then for $z$-subproblems in exact ADMM in function space and the multi-level discretized ADMM, we know $\bar{z}^{k}$ and $\bar{z}^{k}_{h_{k}}$ satisfy the following optimality conditions:
\begin{equation}
\bar{z}^{k}=\Pi_{[a,b]}\left(\bar{u}^{k}+\frac{\bar{\lambda}^{k-1}}{\sigma}\right),\
\bar{z}^{k}_{h_{k}}=\Pi_{[a,b]}\left(\bar{u}^{k}_{h_{k}}+\frac{\bar{\lambda}^{k-1}_{h_{k}}}{\sigma}\right).
\end{equation}
Then subtracting the above two equalities we obtain
\begin{equation}
\begin{aligned}
\|\bar{z}^{k}-\bar{z}^{k}_{h_{k}}\|_{L^{2}(\Omega)}
&=\left\|\Pi_{[a,b]}\left(\bar{u}^{k}+\dfrac{\bar{\lambda}^{k-1}}{\sigma}\right)-\Pi_{[a,b]}\left(\bar{u}^{k}_{h_{k}}+\dfrac{\bar{\lambda}^{k-1}_{h_{k}}}{\sigma}\right)\right\|_{L^{2}(\Omega)}
\\
&\leq \left\|\bar{u}^{k}+\dfrac{\bar{\lambda}^{k-1}}{\sigma}-\bar{u}^{k}_{h_{k}}-\dfrac{\bar{\lambda}^{k-1}_{h_{k}}}{\sigma}\right\|_{L^{2}(\Omega)}\\
&\leq \|\bar{u}^{k}-\bar{u}^{k}_{h_{k}}\|_{L^{2}(\Omega)}+\dfrac{1}{\sigma}\|\bar{\lambda}^{k-1}-\bar{\lambda}^{k-1}_{h_{k}}\|_{L^{2}(\Omega)}\\
&=O(h_{k}).
\end{aligned}
\end{equation}
For the multiplier $\bar{\lambda}^{k}$ and $\bar{\lambda}^{k}_{h_{k}}$, 
\begin{equation}
\bar{\lambda}^{k}=\bar{\lambda}^{k-1}+\sigma(\bar{u}^{k}-\bar{z}^{k}), \
\bar{\lambda}^{k}_{h_{k}}=\bar{\lambda}^{k-1}_{h_{k}}+\sigma(\bar{u}^{k}_{h_{k}}-\bar{z}^{k}_{h_{k}}),
\end{equation}
we can get the estimate
\begin{equation}\label{equ:lambda}
\begin{aligned}
\|\bar{\lambda}^{k}-\bar{\lambda}^{k}_{h_{k}}\|_{L^{2}(\Omega)}
&=\|\bar{\lambda}^{k-1}-\bar{\lambda}^{k-1}_{h_{k}}+\sigma(\bar{u}^{k}-\bar{u}^{k}_{h_{k}})-\sigma(\bar{z}^{k}-\bar{z}^{k}_{h_{k}})\|_{L^{2}(\Omega)}\\
&\leq \|\bar{\lambda}^{k-1}-\bar{\lambda}^{k-1}_{h_{k}}\|_{L^{2}(\Omega)}+ \sigma\|\bar{u}^{k}-\bar{u}^{k}_{h_{k}}\|_{L^{2}(\Omega)}+\sigma\|\bar{z}^{k}-\bar{z}^{k}_{h_{k}}\|_{L^{2}(\Omega)}\\
&=O(h_{k}).
\end{aligned}
\end{equation}
For $u$-subproblems, $\bar{u}^{k+1}$ and $\bar{u}^{k+1}_{h_{k+1}}$ satisfy the following optimality conditions respectively,
\begin{align}
&S^{\ast}[S(\bar{u}^{k}+y_{r})-y_{d}]+\alpha\bar{u}^{k+1}+\bar{\lambda}^{k}+\sigma(\bar{u}^{k+1}-\bar{z}^{k})=0,\\
&S^{\ast}_{h_{k+1}}[S_{h_{k+1}}(\bar{u}_{h_{k+1}}^{k+1}+I_{h_{k+1}}y_{r})-I_{h_{k+1}}y_{d}]+\alpha\bar{u}^{k+1}_{h_{k+1}}+\bar{\lambda}^{k}_{h_{k+1}}+\sigma(\bar{u}^{k+1}_{h_{k+1}}-\bar{z}^{k}_{h_{k+1}})=0.
\end{align}
Then we know from the above two equalities that
\begin{equation}
\begin{aligned}
0=&
S^{\ast}S\bar{u}^{k+1}-S^{\ast}_{h_{k+1}}S_{h_{k+1}}\bar{u}^{k+1}_{h_{k+1}}+S^{\ast}Sy_{r}-S^{\ast}_{h_{k+1}}S_{h_{k+1}}I_{h_{k+1}}y_{r}\\&-S^{\ast}y_{d}+S^{\ast}_{h_{k+1}}I_{h_{k+1}}y_{d}+(\alpha+\sigma)(\bar{u}^{k+1}-\bar{u}^{k+1}_{h_{k+1}})+\bar{\lambda}^{k}-\bar{\lambda}^{k}_{h_{k+1}}-\sigma(\bar{z}^{k}-\bar{z}^{k}_{h_{k+1}}),
\end{aligned}
\end{equation}
so 
\begin{equation}
\begin{aligned}
\label{equality}
(\alpha+\sigma)(\bar{u}^{k+1}-\bar{u}^{k+1}_{h_{k+1}})=&
-(S^{\ast}S\bar{u}^{k+1}-S^{\ast}_{h_{k+1}}S_{h_{k+1}}\bar{u}^{k+1}_{h_{k+1}}+S^{\ast}Sy_{r}-S^{\ast}_{h_{k+1}}S_{h_{k+1}}I_{h_{k+1}}y_{r}
\\&-S^{\ast}y_{d}+S^{\ast}_{h_{k+1}}I_{h_{k+1}}y_{d}+\bar{\lambda}^{k}-\bar{\lambda}^{k}_{h_{k+1}}-\sigma(\bar{z}^{k}-\bar{z}^{k}_{h_{k+1}}))\\
=&-(E_{1}+E_{2}+E_{3}+E_{4}+E_{5}),
\end{aligned}
\end{equation}
where we define
\begin{equation*}
\begin{aligned}
E_{1}&:=S^{\ast}S\bar{u}^{k+1}-S^{\ast}_{h_{k+1}}S_{h_{k+1}}\bar{u}^{k+1}_{h_{k+1}},\\
E_{2}&:=S^{\ast}Sy_{r}-S^{\ast}_{h_{k+1}}S_{h_{k+1}}I_{h_{k+1}}y_{r},\\
E_{3}&:=-S^{\ast}y_{d}+S^{\ast}_{h_{k+1}}I_{h_{k+1}}y_{d},\\
E_{4}&:=\bar{\lambda}^{k}-\bar{\lambda}^{k}_{h_{k+1}},\\
E_{5}&:=-\sigma(\bar{z}^{k}-\bar{z}^{k}_{h_{k+1}}).
\end{aligned}
\end{equation*}

For the term $E_{1}$, we make use of the decomposition, \begin{equation}
\begin{aligned}
\|E_{1}\|_{L^{2}(\Omega)}&=\|S^{\ast}S\bar{u}^{k+1}-S^{\ast}S_{h_{k+1}}\bar{u}^{k+1}+S^{\ast}S_{h_{k+1}}\bar{u}^{k+1}-S^{\ast}_{h_{k+1}}S_{h_{k+1}}\bar{u}^{k+1}+S^{\ast}_{h_{k+1}}S_{h_{k+1}}\bar{u}^{k+1}-S^{\ast}_{h_{k+1}}S_{h_{k+1}}\bar{u}^{k+1}_{h_{k+1}}\|_{L^{2}(\Omega)}\\
&\leq\|S^{\ast}(S-S_{h_{k+1}})\bar{u}^{k+1}\|_{L^{2}(\Omega)}+\|(S^{\ast}-S^{\ast}_{h_{k+1}})S_{h_{k+1}}\bar{u}^{k+1}\|_{L^{2}(\Omega)}+\|S^{\ast}_{h_{k+1}}S_{h_{k+1}}(\bar{u}^{k+1}-\bar{u}^{k+1}_{h_{k+1}})\|_{L^{2}(\Omega)}.
\end{aligned}
\end{equation}
From the well known error estimate $\|S-S_{h}\|_{L^{2}(\Omega)\rightarrow L^{2}(\Omega)}=O(h^{2})$ in lemma \ref{lem:error estimates} and the property that $S^{\ast},S_{h_{k+1}}$ are bounded linear operators, we have
\begin{align}
&\|S^{\ast}(S-S_{h_{k+1}})\bar{u}^{k+1}\|_{L^{2}(\Omega)}\leq
\|S^{\ast}\|_{L^{2}(\Omega)\rightarrow L^{2}(\Omega)}\|S-S_{h_{k+1}}\|_{L^{2}(\Omega)\rightarrow L^{2}(\Omega)}\|\bar{u}^{k+1}\|_{L^{2}(\Omega)}=O(h_{k+1}^{2}),\\
&\|(S^{\ast}-S^{\ast}_{h_{k+1}})S_{h_{k+1}}\bar{u}^{k+1}\|_{L^{2}(\Omega)}\leq\|S^{\ast}-S^{\ast}_{h_{k+1}}\|_{L^{2}(\Omega)\rightarrow L^{2}(\Omega)}\|S_{h_{k+1}}\bar{u}^{k+1}\|_{L^{2}(\Omega)}=O(h_{k+1}^{2}).
\end{align}
Hence, there exists a constant $\hat C$ such that
\begin{equation}
\|E_{1}\|_{L^{2}(\Omega)}\leq \hat Ch_{k+1}^{2}+\|S^{\ast}_{h_{k+1}}S_{h_{k+1}}\|_{L^{2}(\Omega)\rightarrow L^{2}(\Omega)}\|\bar{u}^{k+1}-\bar{u}^{k+1}_{h_{k+1}}\|_{L^{2}(\Omega)}.
\end{equation}

Similarly, based on the property of the projection operator $\|y_{r}-I_{h_{k+1}}y_{r}\|_{L^{2}(\Omega)}=\|y_{d}-I_{h_{k+1}}y_{d}\|_{L^{2}(\Omega)}=O(h_{k+1})$ in Lemma \ref{lem:interpolation error estimate}, for the term $E_{2}$, we have
\begin{equation}
\begin{aligned}
\|E_{2}\|_{L^{2}(\Omega)}=&\|S^{\ast}Sy_{r}-S^{\ast}_{h_{k+1}}Sy_{r}+S^{\ast}_{h_{k+1}}Sy_{r}-S^{\ast}_{h_{k+1}}S_{h_{k+1}}y_{r}+S^{\ast}_{h_{k+1}}S_{h_{k+1}}y_{r}-S^{\ast}_{h_{k+1}}S_{h_{k+1}}I_{h_{k+1}}y_{r}\|_{L^{2}(\Omega)}\\
\leq& \|(S^{\ast}-S^{\ast}_{h_{k+1}})Sy_{r}\|_{L^{2}(\Omega)}+\|S^{\ast}_{h_{k+1}}(S-S_{h_{k+1}})y_{r}\|_{L^{2}(\Omega)}+\|S^{\ast}_{h_{k+1}}S_{h_{k+1}}(y_{r}-I_{h_{k+1}}y_{r})\|_{L^{2}(\Omega)}\\
\leq & \|S^{\ast}-S^{\ast}_{h_{k+1}}\|_{L^{2}(\Omega)\rightarrow L^{2}(\Omega)}\|Sy_{r}\|_{L^{2}(\Omega)}+\|S^{\ast}_{h_{k+1}}\|_{L^{2}(\Omega)\rightarrow L^{2}(\Omega)}\|S-S_{h_{k+1}}\|_{L^{2}(\Omega)\rightarrow L^{2}(\Omega)}\|y_{r}\|_{L^{2}(\Omega)}\\
&+\|S^{\ast}_{h_{k+1}}S_{h_{k+1}}\|_{L^{2}(\Omega)\rightarrow L^{2}(\Omega)}\|y_{r}-I_{h_{k+1}}y_{r}\|_{L^{2}(\Omega)}\\
=&O(h_{k+1}),
\end{aligned}
\end{equation}
and
\begin{equation}
\begin{aligned}
\|E_{3}\|_{L^{2}(\Omega)}&=\|-S^{\ast}y_{d}+S^{\ast}I_{h_{k+1}}y_{d}-S^{\ast}I_{h_{k+1}}y_{d}+S^{\ast}_{h_{k+1}}I_{h_{k+1}}y_{d}\|_{L^{2}(\Omega)}\\
&\leq \|S^{\ast}(y_{d}-I_{h_{k+1}}y_{d})\|_{L^{2}(\Omega)}+\|(S^{\ast}-S^{\ast}_{h_{k+1}})I_{h_{k+1}}y_{d}\|_{L^{2}(\Omega)}\\
&=O(h_{k+1}).
\end{aligned}
\end{equation}

For the term $E_{4}$,
\begin{equation}
\begin{aligned}
\|E_{4}\|_{L^{2}(\Omega)}
&=\|\bar{\lambda}^{k}-\bar{\lambda}^{k}_{h_{k}}+\bar{\lambda}^{k}_{h_{k}}-I_{h_{k+1}}\bar{\lambda}^{k}_{h_{k}}\|_{L^{2}(\Omega)}\\
&\leq \|\bar{\lambda}^{k}-\bar{\lambda}^{k}_{h_{k}}\|_{L^{2}(\Omega)}+\|\bar{\lambda}^{k}_{h_{k}}-I_{h_{k+1}}\bar{\lambda}^{k}_{h_{k}}\|_{L^{2}(\Omega)}\\
&=O(h_{k}+h_{k+1}), 
\end{aligned}
\end{equation}
where we used (\ref{equ:lambda}), the property of the projection operator $\|\bar{\lambda}^{k}_{h_{k}}-I_{h_{k+1}}\bar{\lambda}^{k}_{h_{k}}\|_{L^{2}(\Omega)}=O(h_{k+1})$ and the property of the mesh size $h_{k}>h_{k+1}$. Moreover, as the mesh sizes satisfy $\sum_{k=0}^{\infty}h_{k+1}< \infty,$ there exists a constant $C_{k+1}$ such that $h_{k}<C_{k+1}h_{k+1}$, then we have 
\begin{equation}
\|E_{4}\|_{L^{2}(\Omega)}=\|\bar{\lambda}^{k}-\bar{\lambda}^{k}_{h_{k+1}}\|_{L^{2}(\Omega)}=O(h_{k+1}).
\end{equation}

Similarly,
\begin{equation}
\begin{aligned}
\|E_{5}\|_{L^{2}(\Omega)}
&=\sigma\|\bar{z}^{k}-\bar{z}^{k}_{h_{k}}+\bar{z}^{k}_{h_{k}}-\bar{z}^{k}_{h_{k+1}}\|_{L^{2}(\Omega)}\\
&\leq \sigma\|\bar{z}^{k}-\bar{z}^{k}_{h_{k}}\|_{L^{2}(\Omega)}+\sigma\|\bar{z}^{k}_{h_{k}}-\bar{z}^{k}_{h_{k+1}}\|_{L^{2}(\Omega)}\\
&=O(h_{k+1}).
\end{aligned}
\end{equation}

Then with the fact that operators $S^{\ast}_{h_{k+1}}, S_{h_{k+1}}$ are bounded linear operators, we know from the equality (\ref{equality}) and the estimations of $L^{2}$ norms of $\lbrace E_{i}\rbrace_{i=1}^{5}$ above that 
\begin{equation}
\|\bar{u}^{k+1}-\bar{u}^{k+1}_{h_{k+1}}\|_{L^{2}(\Omega)}=O(h_{k+1}).
\end{equation}
Moreover, we have
\begin{equation}
\begin{aligned}
\|\bar{z}^{k+1}-\bar{z}^{k+1}_{h_{k+1}}\|_{L^{2}(\Omega)}
&=\left\|\Pi_{[a,b]}\left(\bar{u}^{k+1}+\dfrac{\bar{\lambda}^{k}}{\sigma}\right)-\Pi_{[a,b]}\left(\bar{u}^{k+1}_{h_{k+1}}+\dfrac{\bar{\lambda}^{k}_{h_{k+1}}}{\sigma}\right)\right\|_{L^{2}(\Omega)}
\\
&\leq \|\bar{u}^{k+1}-\bar{u}^{k+1}_{h_{k+1}}\|_{L^{2}(\Omega)}+\dfrac{1}{\sigma}\|(\bar{\lambda}^{k}-\bar{\lambda}^{k}_{h_{k+1}})\|_{L^{2}(\Omega)}\\
&=O(h_{k+1}).
\end{aligned}
\end{equation}
Hence the conclusion holds for the case $k+1$ and we can get the assertion
\begin{equation}
\sum_{k=0}^{\infty}\|\bar{u}^{k+1}-\bar{u}^{k+1}_{h_{k+1}}\|_{L^{2}(\Omega)}=O\left(\sum_{k=0}^{\infty}h_{k+1}\right).
\end{equation}
\end{proof}

Similar to the Lemma \ref{lem:continuous exact and discretized exact}, we have the following lemma.
\begin{lemma}\label{lem:continuous inexact and discretized inexact}
Let the initial point be $(u^{0},z^{0};\lambda^{0})\in L^{2}(\Omega)\times {\rm dom}(\delta_{U_{ad}}(\cdot))\times L^{2}(\Omega)$, then
 $\|u^{k+1}-{u}^{k+1}_{h_{k+1}}\|_{L^{2}(\Omega)}=\|z^{k+1}-{z}^{k+1}_{h_{k+1}}\|_{L^{2}(\Omega)}=\|{\lambda}^{k}-{\lambda}^{k}_{h_{k+1}}\|_{L^{2}(\Omega)}=O(h_{k+1}+\delta_{u,h_{k+1}}^{k+1})$, $\forall k\geqslant 1,$
and
\begin{equation*}
\sum_{k=0}^{\infty}\|{u}^{k+1}-{u}^{k+1}_{h_{k+1}}\|_{L^{2}(\Omega)}=O\left(\sum_{k=0}^{\infty}\left(h_{k+1}+\delta_{u,h_{k+1}}^{k+1}\right)\right).
\end{equation*}
\end{lemma}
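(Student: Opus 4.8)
The plan is to run a mathematical induction on $k$ that parallels the proof of Lemma~\ref{lem:continuous exact and discretized exact}, the only new ingredient being that both the continuous and the discretized $u$-subproblems are solved inexactly, so an additional error enters through their first-order optimality conditions. I would carry the inductive hypothesis in the form $\|u^{j}-u^{j}_{h_{j}}\|_{L^{2}(\Omega)}=\|\lambda^{j-1}-\lambda^{j-1}_{h_{j}}\|_{L^{2}(\Omega)}=O(h_{j}+\delta_{u,h_{j}}^{j})$ for all $j\le k$ and establish it for $j=k+1$. The base case is identical to that of Lemma~\ref{lem:continuous exact and discretized exact}: Lemma~\ref{lem:interpolation error estimate} gives $\|\lambda^{0}-I_{h_{1}}\lambda^{0}\|_{L^{2}(\Omega)}=O(h_{1})$, and feeding this into the inexact $u$-subproblem produces the extra $O(\delta_{u,h_{1}}^{1})$ contribution.

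For the $z$- and $\lambda$-updates the reasoning of Lemma~\ref{lem:continuous exact and discretized exact} transfers almost verbatim. The $z$-step is still an exact projection, $z^{k}=\Pi_{[a,b]}(u^{k}+\lambda^{k-1}/\sigma)$, so nonexpansiveness of $\Pi_{[a,b]}$ yields $\|z^{k}-z^{k}_{h_{k}}\|_{L^{2}(\Omega)}\le\|u^{k}-u^{k}_{h_{k}}\|_{L^{2}(\Omega)}+\sigma^{-1}\|\lambda^{k-1}-\lambda^{k-1}_{h_{k}}\|_{L^{2}(\Omega)}=O(h_{k}+\delta_{u,h_{k}}^{k})$, and the multiplier recursion together with the triangle inequality propagates the same order to $\|\lambda^{k}-\lambda^{k}_{h_{k}}\|_{L^{2}(\Omega)}$. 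Passing from level $h_{k}$ to level $h_{k+1}$ is then done, as before, by inserting the interpolation error $\|\lambda^{k}_{h_{k}}-I_{h_{k+1}}\lambda^{k}_{h_{k}}\|_{L^{2}(\Omega)}=O(h_{k+1})$ and invoking the comparability $h_{k}\le C\,h_{k+1}$ of the geometrically refined meshes.

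The core of the argument is again the $u$-subproblem. Writing the two inexact optimality conditions, $S^{\ast}[S(u^{k+1}+y_{r})-y_{d}]+\alpha u^{k+1}+\lambda^{k}+\sigma(u^{k+1}-z^{k})=\delta_{u}^{k+1}$ in function space and its discretized analogue with right-hand side $\delta_{u,h_{k+1}}^{k+1}$, and subtracting, I would move $S^{\ast}_{h_{k+1}}S_{h_{k+1}}(u^{k+1}-u^{k+1}_{h_{k+1}})$ to the left to obtain $((\alpha+\sigma)I+S^{\ast}_{h_{k+1}}S_{h_{k+1}})(u^{k+1}-u^{k+1}_{h_{k+1}})=-(\tilde{E}_{1}+E_{2}+E_{3}+E_{4}+E_{5})+(\delta_{u}^{k+1}-\delta_{u,h_{k+1}}^{k+1})$, where $\tilde{E}_{1}=(S^{\ast}S-S^{\ast}_{h_{k+1}}S_{h_{k+1}})u^{k+1}$ and $E_{2},\dots,E_{5}$ are the operator- and interpolation-splitting terms of Lemma~\ref{lem:continuous exact and discretized exact}. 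Each of these is $O(h_{k+1})$ by the operator estimate $\|S-S_{h_{k+1}}\|=O(h_{k+1}^{2})$ of Lemma~\ref{lem:error estimates}, the interpolation estimate of Lemma~\ref{lem:interpolation error estimate}, and the inductive bounds on the $z$- and $\lambda$-differences. Since $(\alpha+\sigma)I+S^{\ast}_{h_{k+1}}S_{h_{k+1}}$ is self-adjoint and coercive with coercivity constant at least $\alpha+\sigma>0$, it is boundedly invertible uniformly in $h_{k+1}$; inverting it gives $\|u^{k+1}-u^{k+1}_{h_{k+1}}\|_{L^{2}(\Omega)}=O(h_{k+1}+\delta_{u,h_{k+1}}^{k+1})$, from which the level-$(k+1)$ estimates for $z$ and $\lambda$ follow as in the previous paragraph. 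Summing over $k$ and using $\sum_{k}h_{k+1}<\infty$ and $\sum_{k}\delta_{u,h_{k+1}}^{k+1}\le\sum_{k}\xi_{k+1}<\infty$ then delivers the asserted $\sum_{k}\|u^{k+1}-u^{k+1}_{h_{k+1}}\|_{L^{2}(\Omega)}=O(\sum_{k}(h_{k+1}+\delta_{u,h_{k+1}}^{k+1}))$.

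The step I expect to require the most care is the bookkeeping of the two distinct sources of inexactness. The continuous error $\delta_{u}^{k+1}$ and the discrete error $\delta_{u,h_{k+1}}^{k+1}$ enter the subtracted optimality condition only through their difference, and one must argue that this difference is controlled at order $\delta_{u,h_{k+1}}^{k+1}$ --- equivalently by the summable tolerance $\xi_{k+1}$ --- and, more importantly, that neither the discretization error nor the inexactness error accumulates uncontrollably through the $\lambda$-recursion. As in Lemma~\ref{lem:continuous exact and discretized exact}, the non-accumulation rests on the geometric refinement $h_{k}\le C\,h_{k+1}$ together with the joint summability of $\{h_{k+1}\}$ and $\{\xi_{k+1}\}$; confirming that the inexactness terms obey the same mechanism, and pinning down exactly which error index survives in the per-iteration bound, is the delicate point.
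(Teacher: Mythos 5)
Your proposal takes essentially the same route as the paper: the paper's own proof of this lemma is merely the remark that it follows by the same mathematical induction as Lemma~\ref{lem:continuous exact and discretized exact} with the inexactness residuals carried along, which is precisely what you reconstruct (nonexpansiveness of $\Pi_{[a,b]}$ for the $z$-step, the triangle inequality through the $\lambda$-recursion, the interpolation and operator error estimates, and the $E_{1}$--$E_{5}$ splitting for the $u$-subproblem, with the residual difference $\delta_{u}^{k+1}-\delta_{u,h_{k+1}}^{k+1}$ as the only new term). Your one deviation --- moving $S^{\ast}_{h_{k+1}}S_{h_{k+1}}(u^{k+1}-u^{k+1}_{h_{k+1}})$ to the left-hand side and inverting the coercive operator $(\alpha+\sigma)I+S^{\ast}_{h_{k+1}}S_{h_{k+1}}$, whose inverse is bounded by $1/(\alpha+\sigma)$ uniformly in $h$ (the same operator behind the constant $\rho$ in Lemma~\ref{lmm:tildeu}) --- is in fact cleaner than the paper's treatment of $E_{1}$ in Lemma~\ref{lem:continuous exact and discretized exact}, which tacitly needs $\|S^{\ast}_{h}S_{h}\|_{L^{2}(\Omega)\rightarrow L^{2}(\Omega)}<\alpha+\sigma$ to close that estimate, and your closing caveat about which inexactness index survives in the bound correctly flags a genuine imprecision in the lemma's statement (the continuous residual $\delta_{u}^{k+1}$ is not literally dominated by $\delta_{u,h_{k+1}}^{k+1}$ unless the two tolerances are coupled, e.g.\ both bounded by the summable $\xi_{k+1}$) that the paper glosses over by omitting the proof.
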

\begin{proof}
We employ the mathematical induction to prove the conclusion. The proof is similar to Lemma \ref{lem:continuous exact and discretized exact}, here we do not talk about it in detail.
\end{proof}
To prove the convergence of the mADMM algorithm, 
let $(\tilde{u}_{h_{k+1}}^{k+1},\tilde{z}^{k+1}_{h_{k+1}})$ represents the exact solutions of the $(k+1)$th iteration of Algorithm \ref{alg:multi-level ADMM algorithm}:
\begin{align}
\label{tildeu}
&\tilde{u}_{h_{k+1}}^{k+1}:={\rm argmin}{\hat J}(u_{h_{k+1}})+\langle\lambda_{h_{k+1}}^{k},u_{h_{k+1}}-z_{h_{k+1}}^{k}\rangle+\dfrac{\sigma}{2}\|u_{h_{k+1}}-z_{h_{k+1}}^{k}\|_{L^{2}(\Omega)}^{2},\\
\label{tildez}
&\tilde{z}^{k+1}_{h_{k+1}}:={\rm argmin}\delta_{U_{ad,h}}(z_{h_{k+1}})+\langle{\lambda}^{k}_{h_{k+1}},\tilde{u}^{k+1}_{h_{k+1}}-z_{h_{k+1}}\rangle+\dfrac{\sigma}{2}\|\tilde{u}^{k+1}_{h_{k+1}}-z_{h_{k+1}}\|_{L^{2}(\Omega)}^{2}.
\end{align}
The following lemma gives the gap between $(\tilde{u}_{h_{k+1}}^{k+1}, \tilde{z}_{h_{k+1}}^{k+1})$ and $({u}_{h_{k+1}}^{k+1}, {z}_{h_{k+1}}^{k+1})$. 
\begin{lemma}\rm(\cite{Song2017Fe}, Lemma 4.4)
\label{lmm:tildeu}
For any $k\geqslant 0$, we have
\begin{align*}
&\|\tilde{u}_{h_{k+1}}^{k+1}-u_{h_{k+1}}^{k+1}\|_{L^{2}(\Omega)}\leq\rho\|\delta_{u,h_{k+1}}^{k+1}\|_{L^{2}(\Omega)},\\
&\|\tilde{z}_{h_{k+1}}^{k+1}-z_{h_{k+1}}^{k+1}\|_{L^{2}(\Omega)}\leq\rho\|\delta_{u,h_{k+1}}^{k+1}\|_{L^{2}(\Omega)},
\end{align*}
where $\rho:=\|[S^{\ast}_{h_{k+1}}S_{h_{k+1}}+(\alpha+\sigma)I]^{-1}\|_{{L^{2}(\Omega)}\rightarrow {L^{2}(\Omega)}}$. 
\end{lemma}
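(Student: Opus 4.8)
The plan is to compare the first-order optimality conditions of the exact and inexact $u$-subproblems and to exploit the fact that, $\hat J_{h_{k+1}}$ being quadratic, the gradient map $\nabla \hat J_{h_{k+1}}$ is affine. Throughout I read the defining relation (\ref{tildeu}) with the discretized functional $\hat J_{h_{k+1}}$, so that $\tilde u_{h_{k+1}}^{k+1}$ is the exact minimizer of the very subproblem that Step~1 of Algorithm~\ref{alg:multi-level ADMM algorithm} solves inexactly.

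First I would record the two optimality conditions. Since $\tilde u_{h_{k+1}}^{k+1}$ is the exact minimizer, stationarity gives
\begin{equation*}
\nabla \hat J_{h_{k+1}}(\tilde u_{h_{k+1}}^{k+1})+\lambda_{h_{k+1}}^{k}+\sigma(\tilde u_{h_{k+1}}^{k+1}-z_{h_{k+1}}^{k})=0,
\end{equation*}
while the inexact iterate satisfies the same relation with right-hand side $\delta_{u,h_{k+1}}^{k+1}$, by the very definition of the error vector in Step~1. A direct computation gives $\nabla \hat J_{h_{k+1}}(u)=[S^\ast_{h_{k+1}}S_{h_{k+1}}+\alpha I]u+w$, where $w:=S^\ast_{h_{k+1}}S_{h_{k+1}}I_{h_{k+1}}y_{r}-S^\ast_{h_{k+1}}I_{h_{k+1}}y_{d}$ does not depend on $u$. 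Subtracting the two conditions therefore cancels $w$ together with the common terms $\lambda_{h_{k+1}}^{k}$ and $\sigma z_{h_{k+1}}^{k}$, leaving the single linear equation
\begin{equation*}
[S^\ast_{h_{k+1}}S_{h_{k+1}}+(\alpha+\sigma)I]\,(u_{h_{k+1}}^{k+1}-\tilde u_{h_{k+1}}^{k+1})=\delta_{u,h_{k+1}}^{k+1}.
\end{equation*}

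Next I would note that $S^\ast_{h_{k+1}}S_{h_{k+1}}$ is self-adjoint and positive semidefinite, so that $S^\ast_{h_{k+1}}S_{h_{k+1}}+(\alpha+\sigma)I$ is positive definite (as $\alpha,\sigma>0$) and hence boundedly invertible, with the norm of its inverse equal to the constant $\rho$ in the statement. Applying this inverse to the displayed identity and taking $L^2(\Omega)$-norms yields the first estimate $\|\tilde u_{h_{k+1}}^{k+1}-u_{h_{k+1}}^{k+1}\|_{L^2(\Omega)}\le\rho\|\delta_{u,h_{k+1}}^{k+1}\|_{L^2(\Omega)}$ at once. For the second estimate I would solve the $z$-subproblems explicitly: completing the square in (\ref{tildez}) and in Step~2 shows that both iterates are metric projections onto the closed convex set $U_{ad,h_{k+1}}$, namely $\tilde z_{h_{k+1}}^{k+1}=\Pi_{U_{ad,h_{k+1}}}(\tilde u_{h_{k+1}}^{k+1}+\lambda_{h_{k+1}}^{k}/\sigma)$ and $z_{h_{k+1}}^{k+1}=\Pi_{U_{ad,h_{k+1}}}(u_{h_{k+1}}^{k+1}+\lambda_{h_{k+1}}^{k}/\sigma)$. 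The shared shift $\lambda_{h_{k+1}}^{k}/\sigma$ cancels, so the non-expansiveness of the projection reduces the $z$-gap to the $u$-gap and gives $\|\tilde z_{h_{k+1}}^{k+1}-z_{h_{k+1}}^{k+1}\|_{L^2(\Omega)}\le\|\tilde u_{h_{k+1}}^{k+1}-u_{h_{k+1}}^{k+1}\|_{L^2(\Omega)}\le\rho\|\delta_{u,h_{k+1}}^{k+1}\|_{L^2(\Omega)}$.

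I do not expect a genuine obstacle; the argument is essentially algebraic. The only points needing care are that the affine structure of $\nabla \hat J_{h_{k+1}}$ makes the difference of gradients exactly $[S^\ast_{h_{k+1}}S_{h_{k+1}}+\alpha I]$ applied to the argument difference, with no remainder term, and that the positive definiteness of $S^\ast_{h_{k+1}}S_{h_{k+1}}+(\alpha+\sigma)I$ is what simultaneously guarantees invertibility and finiteness of $\rho$.
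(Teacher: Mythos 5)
Your proof is correct: subtracting the two stationarity conditions of the strongly convex quadratic $u$-subproblem gives exactly $[S^{\ast}_{h_{k+1}}S_{h_{k+1}}+(\alpha+\sigma)I](u_{h_{k+1}}^{k+1}-\tilde u_{h_{k+1}}^{k+1})=\delta_{u,h_{k+1}}^{k+1}$, and the nonexpansiveness of the $L^{2}$-projection onto the closed convex set $U_{ad,h_{k+1}}$ (with the common shift $\lambda_{h_{k+1}}^{k}/\sigma$ cancelling) transfers the bound to the $z$-iterates. The paper itself gives no proof, citing Lemma 4.4 of \cite{Song2017Fe} instead, and your argument is precisely the standard one underlying that cited result, so this matches the intended approach.
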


For the convenience of analyzing the non-ergodic iteration complexity, let $(u^{\ast}_{h_{k+1}}, z^{\ast}_{h_{k+1}}, \lambda^{\ast}_{h_{k+1}})$ denotes the KKT point of the discretized reduced problem with the mesh size $h_{k+1}$ 
\begin{equation}\label{discretized reduced problem}
\begin{aligned}
\min \limits_{u_{h_{k+1}},z_{h_{k+1}}}
&\frac{1}{2}\|S_{h_{k+1}}(u_{h_{k+1}}+I_{h_{k+1}}y_{r})-I_{h_{k+1}}y_d\|_{L^{2}(\Omega_{h})}^{2}
+\frac{\alpha}{2}\|u_{h_{k+1}}\|_{{L^{2}(\Omega_{h})}}^{2}+\delta_{U_{ad,h_{k+1}}}(z_{h_{k+1}})\\
{\rm s.t.}  \ \   \ \ \ & u_{h_{k+1}}=z_{h_{k+1}}.
\end{aligned} \tag{DRP}
\end{equation} 
Moreover, we provide a lemma and two propositions which are essential for analyzing the iteration complexity of our mADMM. 

\begin{lemma}\rm(\cite{Chen2017}, Lemma 6.1)
\label{lmm:complexity1}
If a sequence $\left\{a_{i}\right\} \in \mathbb{R}$ satisfies the following conditions: 
\begin{equation}
a_{i} \geq 0 \  for \ any \ i \geq 0 \ \ and \ \ \sum_{i=0}^{\infty} a_{i}=\bar a<\infty .
\end{equation}
Then we have $\min_{i=1,2, \cdots, k}\left\{a_{i}\right\} \leq \frac{\overline{a}}{k}$ and $\lim_{k \rightarrow \infty}\left\{k \cdot \min _{i=1,2, \cdots, k}\left\{a_{i}\right\}\right\}=0$.
\end{lemma}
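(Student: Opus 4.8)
The plan is to establish the two claims separately, the first by an elementary averaging argument and the second by exploiting the convergence of the tail of the series. Throughout I write $m_k := \min_{1 \leq i \leq k}\{a_i\}$ for the running minimum, so that the goal is to show $m_k \leq \bar a / k$ and $k\, m_k \to 0$.

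For the first inequality, I would observe that $a_i \geq m_k$ for every $i \in \{1, \dots, k\}$, so summing these $k$ bounds gives $k\, m_k \leq \sum_{i=1}^{k} a_i$. Since every term is nonnegative, the partial sum is dominated by the whole series, $\sum_{i=1}^{k} a_i \leq \sum_{i=0}^{\infty} a_i = \bar a$. Combining the two estimates yields $k\, m_k \leq \bar a$, i.e. $m_k \leq \bar a / k$, which is exactly the first assertion.

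For the limit, the key observation is that convergence of $\sum_{i=0}^{\infty} a_i$ forces its tails to vanish: for any $\varepsilon > 0$ there is an index $N$ with $\sum_{i=N}^{\infty} a_i < \varepsilon$. Fixing such an $N$ and taking any $k > N$, I would apply the same averaging idea to the block of indices $\{N, \dots, k\}$: because $m_k \leq \min_{N \leq i \leq k} a_i$, multiplying by the number of terms $k - N + 1$ gives $(k - N + 1)\, m_k \leq \sum_{i=N}^{k} a_i \leq \sum_{i=N}^{\infty} a_i < \varepsilon$. Hence $k\, m_k < \frac{k}{k - N + 1}\,\varepsilon$, and since $\frac{k}{k - N + 1} \to 1$ as $k \to \infty$ for this fixed $N$, we obtain $\limsup_{k \to \infty} k\, m_k \leq \varepsilon$. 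As $\varepsilon > 0$ is arbitrary and $k\, m_k \geq 0$, this forces $\lim_{k \to \infty} k\, m_k = 0$.

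The whole argument is essentially routine; the one point deserving care is in the second part, where I deliberately split off a fixed tail of prescribed smallness \emph{before} averaging, rather than averaging over the entire segment $\{1, \dots, k\}$. Averaging over the full range only reproduces the weaker bound $\bar a / k$ from the first part and cannot deliver the $o(1/k)$ refinement, so isolating the tail is the essential step.
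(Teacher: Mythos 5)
Your proof is correct, and since the paper states this lemma with only a citation to \cite{Chen2017} and gives no proof of its own, there is nothing in the paper to diverge from: your argument is the standard one for this fact. Both halves are sound --- the averaging bound $k\,m_k \leq \sum_{i=1}^{k} a_i \leq \bar a$ and the fixed-tail splitting $(k-N+1)\,m_k \leq \sum_{i=N}^{k} a_i < \varepsilon$ --- and your closing remark correctly identifies why the tail must be isolated before averaging (a common equivalent variant averages over the half-tail, giving $k\,m_k \leq 2\sum_{i=\lceil k/2\rceil}^{k} a_i \to 0$, which avoids the $\varepsilon$--$N$ bookkeeping but is the same idea).
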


\begin{proposition}
\label{lmm:complexity2}
Let $\left\{\left(u^{k}_{h_{k}}, z^{k}_{h_{k}}, \lambda^{k}_{h_{k}}\right)\right\}$ be the sequence generated by Algorithm \ref{alg:multi-level ADMM algorithm} and $\left\{\left(u^{\ast}_{h_{k}}, z^{\ast}_{h_{k}}, \lambda^{\ast}_{h_{k}}\right)\right\}$ denotes the KKT point of the discretized reduced problem. Then for $k\geq 0$ we have
\begin{align*} 
&\left\langle\delta^{k+1}_{u,h_{k+1}}, u^{k+1}_{h_{k+1}}-u^{\ast}_{h_{k+1}}\right\rangle+\frac{1}{2\tau\sigma}\|\lambda^{\ast}_{h_{k+1}}-\lambda^{k}_{h_{k+1}}\|_{L^{2}(\Omega)}^{2}+\frac{\sigma}{2}\|z^{\ast}_{h_{k+1}}-z^{k}_{h_{k+1}}\|_{L^{2}(\Omega)}^{2}\\
&-\frac{1}{2\tau\sigma}\|\lambda^{\ast}_{h_{k+1}}-\lambda^{k+1}_{h_{k+1}}\|_{L^{2}(\Omega)}^{2}-\frac{\sigma}{2}\|z^{\ast}_{h_{k+1}}-z^{k+1}_{h_{k+1}}\|_{L^{2}(\Omega)}^{2}\\
\geq &\alpha\|u^{k+1}_{h_{k+1}}-u^{\ast}_{h_{k+1}}\|_{L^{2}(\Omega)}^{2}+\frac{(3-\tau)\sigma}{2}\|u^{k+1}_{h_{k+1}}-z^{k+1}_{h_{k+1}}\|_{L^{2}(\Omega)}^{2}+\frac{\sigma}{2}\|u^{k+1}_{h_{k+1}}-z^{k}_{h_{k+1}}\|_{L^{2}(\Omega)}^{2}.
\end{align*}
\end{proposition}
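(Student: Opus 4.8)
The plan is to mimic the standard ADMM contraction-inequality argument, now carried out in the discretized space with mesh size $h_{k+1}$, combining the optimality conditions of the two subproblems with the KKT conditions of (DRP). First I would record the three relations that drive the estimate. The inexact $u$-subproblem in Algorithm \ref{alg:multi-level ADMM algorithm} yields
\begin{equation*}
\nabla\hat J_{h_{k+1}}(u^{k+1}_{h_{k+1}})+\lambda^{k}_{h_{k+1}}+\sigma(u^{k+1}_{h_{k+1}}-z^{k}_{h_{k+1}})=\delta^{k+1}_{u,h_{k+1}},
\end{equation*}
the $z$-subproblem gives the inclusion $\lambda^{k}_{h_{k+1}}+\sigma(u^{k+1}_{h_{k+1}}-z^{k+1}_{h_{k+1}})\in\partial\delta_{U_{ad,h_{k+1}}}(z^{k+1}_{h_{k+1}})$, and the KKT system of (DRP) reads $\nabla\hat J_{h_{k+1}}(u^{\ast}_{h_{k+1}})+\lambda^{\ast}_{h_{k+1}}=0$, $\lambda^{\ast}_{h_{k+1}}\in\partial\delta_{U_{ad,h_{k+1}}}(z^{\ast}_{h_{k+1}})$, together with $u^{\ast}_{h_{k+1}}=z^{\ast}_{h_{k+1}}$.

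Next I would extract the two monotonicity inequalities. Since $\hat J_{h_{k+1}}$ is the sum of a convex quadratic and the $\tfrac{\alpha}{2}\|\cdot\|^{2}$ term, its gradient is strongly monotone with modulus $\alpha$, so subtracting the two gradient identities and pairing with $u^{k+1}_{h_{k+1}}-u^{\ast}_{h_{k+1}}$ produces the term $\alpha\|u^{k+1}_{h_{k+1}}-u^{\ast}_{h_{k+1}}\|^{2}_{L^{2}(\Omega)}$ on the right, with the residuals $\lambda^{k}_{h_{k+1}}$, $\sigma(u^{k+1}_{h_{k+1}}-z^{k}_{h_{k+1}})$ and the error $\delta^{k+1}_{u,h_{k+1}}$ appearing in the cross terms. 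The subdifferential of the indicator is (maximally) monotone, so pairing the difference of the two inclusions with $z^{k+1}_{h_{k+1}}-z^{\ast}_{h_{k+1}}$ gives a nonnegative quantity. I would then add these two inequalities and use the multiplier update in the form $u^{k+1}_{h_{k+1}}-z^{k+1}_{h_{k+1}}=\tfrac{1}{\tau\sigma}(\lambda^{k+1}_{h_{k+1}}-\lambda^{k}_{h_{k+1}})$ together with the feasibility $u^{\ast}_{h_{k+1}}=z^{\ast}_{h_{k+1}}$ to rewrite every inner product purely in terms of the increments $\lambda^{k+1}_{h_{k+1}}-\lambda^{k}_{h_{k+1}}$, $z^{k+1}_{h_{k+1}}-z^{k}_{h_{k+1}}$ and the differences against the starred quantities.

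The final step is the bookkeeping: converting those inner products into the telescoping squared-norm differences on the left via the polarization identities $\|a\|^{2}-\|b\|^{2}=\langle a-b,a+b\rangle$ and $2\langle x-y,x-z\rangle=\|x-y\|^{2}+\|x-z\|^{2}-\|y-z\|^{2}$ (in the $L^{2}(\Omega)$ inner product). The multiplier increment contributes the factor $\tfrac{1}{2\tau\sigma}\|\lambda^{\ast}_{h_{k+1}}-\lambda^{k}_{h_{k+1}}\|^{2}-\tfrac{1}{2\tau\sigma}\|\lambda^{\ast}_{h_{k+1}}-\lambda^{k+1}_{h_{k+1}}\|^{2}$, while the $z$-increment supplies the analogous $\tfrac{\sigma}{2}$-weighted pair, and the cross term between the two increments (again using $u^{k+1}_{h_{k+1}}-z^{k+1}_{h_{k+1}}=\tfrac{1}{\tau\sigma}(\lambda^{k+1}_{h_{k+1}}-\lambda^{k}_{h_{k+1}})$) is exactly what generates the residual terms $\tfrac{(3-\tau)\sigma}{2}\|u^{k+1}_{h_{k+1}}-z^{k+1}_{h_{k+1}}\|^{2}$ and $\tfrac{\sigma}{2}\|u^{k+1}_{h_{k+1}}-z^{k}_{h_{k+1}}\|^{2}$ on the right. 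The main obstacle is precisely this last reorganization: tracking the $\tau$-dependent coefficients so that the leftover cross term assembles into the coefficient $3-\tau$ rather than leaving a stray indefinite term. I would isolate the increment $\lambda^{k+1}_{h_{k+1}}-\lambda^{k}_{h_{k+1}}=\tau\sigma(u^{k+1}_{h_{k+1}}-z^{k+1}_{h_{k+1}})$ early and substitute it consistently, which is the step where the analogous argument of (\cite{Song2017Fe}, Thm. 2.5) fixes the admissible range $\tau\in(0,\tfrac{1+\sqrt5}{2})$ to keep $3-\tau$ of the right sign for the subsequent summability estimate.
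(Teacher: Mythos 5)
Your proposal is correct and follows essentially the same route as the paper's proof: the same three optimality/KKT relations, strong monotonicity of $\nabla\hat J_{h_{k+1}}$ with modulus $\alpha$ combined with monotonicity of $\partial\delta_{U_{ad,h_{k+1}}}$, the substitution $u^{k+1}_{h_{k+1}}-z^{k+1}_{h_{k+1}}=\frac{1}{\tau\sigma}(\lambda^{k+1}_{h_{k+1}}-\lambda^{k}_{h_{k+1}})$, and the two polarization identities to produce the telescoping terms, with the coefficient $3-\tau$ arising exactly as $(2-\tau)+1$ from the multiplier and $z$-increment cross terms. One small clarification: the inequality of this proposition is an identity-level rearrangement valid for any $\tau>0$; the restriction $\tau\in\left(0,\frac{1+\sqrt{5}}{2}\right)$ and the sign of $3-\tau$ matter only in the subsequent summability argument of Theorem \ref{convergence theorem}, not in the derivation here.
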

\begin{proof}
First, for any $f_{1}, f_{1}^{\prime}, f_{2}, f_{2}^{\prime}\in L^{2}(\Omega)$, we have the following two important equalities hold
\begin{align}  
\label{equ1}
&\langle f_{1}, f_{2}\rangle_{L^{2}(\Omega)}=\frac{1}{2}\left(\|f_{1}\|_{L^{2}(\Omega)}^{2}+\|f_{2}\|_{L^{2}(\Omega)}^{2}-\|f_{1}-f_{2}\|_{L^{2}(\Omega)}^{2}\right)
=\frac{1}{2}\left(\|f_{1}+f_{2}\|_{L^{2}(\Omega)}^{2}-\|f_{1}\|_{L^{2}(\Omega)}^{2}-\|f_{2}\|_{L^{2}(\Omega)}^{2}\right),\\
\label{equ2}
&\left\langle f_{1}-f_{1}^{\prime}, f_{2}-f_{2}^{\prime}\right\rangle_{L^{2}(\Omega)}=\frac{1}{2}\left(\|f_{1}+f_{2}\|_{L^{2}(\Omega)}^{2}+\left\|f_{1}^{\prime}+f_{2}^{\prime}\right\|_{L^{2}(\Omega)}^{2}-\left\|f_{1}+f_{2}^{\prime}\right\|_{L^{2}(\Omega)}^{2}-\left\|f_{1}^{\prime}+f_{2}\right\|_{L^{2}(\Omega)}^{2}\right).
\end{align} 
The proof of the above two equalities can be easily obtained by the definition of $L^{2}-$norm. 

By the optimality conditions of the $u$-subproblem and $z$-subproblem corresponding to $u_{h_{k+1}}^{k+1}$ and $z_{h_{k+1}}^{k+1}$, we have 
\begin{align}
\label{optimality1}
&\nabla {\hat J_{h_{k+1}}}(u_{h_{k+1}}^{k+1})=S^{\ast}_{h_{k+1}}[S_{h_{k+1}}(u_{h_{k+1}}^{k+1}+I_{h_{k+1}}y_{r})-I_{h_{k+1}}y_{d}]+\alpha u^{k+1}_{h_{k+1}}=\delta_{u,h_{k+1}}^{k+1}-(\lambda^{k}_{h_{k+1}}+\sigma(u^{k+1}_{h_{k+1}}-z^{k}_{h_{k+1}})),\\
&\lambda^{k}_{h_{k+1}}+\sigma(u^{k+1}_{h_{k+1}}-z^{k+1}_{h_{k+1}}))\in \partial
\delta_{U_{ad,h_{k+1}}}(z_{h_{k+1}}^{k+1}).
\end{align}
Moreover, $\left\{\left(u^{\ast}_{h_{k}}, z^{\ast}_{h_{k}}, \lambda^{\ast}_{h_{k}}\right)\right\}$ denotes the KKT point of the discretized reduced problem, so it satisfies the following KKT system 
\begin{align}
\label{optimality2}
&\nabla {\hat J_{h_{k+1}}}(u_{h_{k+1}}^{\ast})=S^{\ast}_{h_{k+1}}[S_{h_{k+1}}(u_{h_{k+1}}^{\ast}+I_{h_{k+1}}y_{r})-I_{h_{k+1}}y_{d}]+\alpha u^{\ast}_{h_{k+1}}=-\lambda^{\ast}_{h_{k+1}},\\
&\lambda^{\ast}_{h_{k+1}}\in \partial
\delta_{U_{ad,h_{k+1}}}(z_{h_{k+1}}^{\ast}),\\
&u^{\ast}_{h_{k+1}}=z^{\ast}_{h_{k+1}}.
\end{align}
Then by combining (\ref{optimality1}) and (\ref{optimality2}), we obtain
\begin{equation}
\begin{aligned}
&\left\langle \nabla {\hat J_{h_{k+1}}}(u_{h_{k+1}}^{k+1})-\nabla {\hat J_{h_{k+1}}}(u_{h_{k+1}}^{\ast}), u_{h_{k+1}}^{k+1}-u_{h_{k+1}}^{\ast} \right\rangle\\
=&\left\langle S^{\ast}_{h_{k+1}}S_{h_{k+1}}(u_{h_{k+1}}^{k+1}-u_{h_{k+1}}^{\ast})+\alpha(u_{h_{k+1}}^{k+1}-u_{h_{k+1}}^{\ast}), u_{h_{k+1}}^{k+1}-u_{h_{k+1}}^{\ast}  \right\rangle\\
=&\|S_{h_{k+1}}(u_{h_{k+1}}^{k+1}-u_{h_{k+1}}^{\ast})\|_{L^{2}(\Omega)}^{2}+\alpha\|u_{h_{k+1}}^{k+1}-u_{h_{k+1}}^{\ast}\|_{L^{2}(\Omega)}^{2}.
\end{aligned}
\end{equation}
Moreover, the subdifferential operator $\partial
\delta_{U_{ad,h_{k+1}}}(z)$ is a maximal monotone operator, so the following inequality holds,
\begin{equation}
\left\langle \partial
\delta_{U_{ad,h_{k+1}}}(z_{h_{k+1}}^{k+1})-\partial
\delta_{U_{ad,h_{k+1}}}(z_{h_{k+1}}^{\ast}), z_{h_{k+1}}^{k+1}-z_{h_{k+1}}^{\ast} \right\rangle\geq 0.
\end{equation}
For the convenience of analyzing, we define $r_{h_{k+1}}^{k+1}=u_{h_{k+1}}^{k+1}-z_{h_{k+1}}^{k+1}$.
Therefore, we can derive that 
\begin{align}
&\left\langle \delta_{u,h_{k+1}}^{k+1}-(\tilde \lambda_{h_{k+1}}^{k}+\sigma(z_{h_{k+1}}^{k+1}-z_{h_{k+1}}^{k}))+\lambda_{h_{k+1}}^{\ast},  u_{h_{k+1}}^{k+1}-u_{h_{k+1}}^{\ast}\right\rangle\geq \alpha\|u_{h_{k+1}}^{k+1}-u_{h_{k+1}}^{\ast}\|_{L^{2}(\Omega)}^{2},\\
&\left\langle \lambda_{h_{k+1}}^{k}+\sigma r_{h_{k+1}}^{k+1}, z_{h_{k+1}}^{k+1}-z_{h_{k+1}}^{\ast} \right\rangle\geq 0.
\end{align}
Then adding the above two equalities we obtain
\begin{equation}
\label{estimate}
\left\langle \delta_{u,h_{k+1}}^{k+1},  u_{h_{k+1}}^{k+1}-u_{h_{k+1}}^{\ast}\right\rangle-\left\langle \lambda_{h_{k+1}}^{k}+\sigma r_{h_{k+1}}^{k+1}-\lambda_{h_{k+1}}^{\ast}, r_{h_{k+1}}^{k+1}\right\rangle-\sigma\left\langle z_{h_{k+1}}^{k+1}-z_{h_{k+1}}^{k}, u_{h_{k+1}}^{k+1}-u_{h_{k+1}}^{\ast} \right\rangle \geq \alpha\|u_{h_{k+1}}^{k+1}-u_{h_{k+1}}^{\ast}\|_{L^{2}(\Omega)}^{2}.
\end{equation}
Next, we estimate the last two terms on the left side separately,
\begin{equation}
\begin{aligned}
\label{estimate1}
&\left\langle \lambda_{h_{k+1}}^{\ast}-(\lambda_{h_{k+1}}^{k}+\sigma r_{h_{k+1}}^{k+1}), r_{h_{k+1}}^{k+1}\right\rangle
\\
=&\frac{1}{\tau\sigma}\left\langle \lambda_{h_{k+1}}^{\ast}-\lambda_{h_{k+1}}^{k}, \lambda_{h_{k+1}}^{k+1}-\lambda_{h_{k+1}}^{k} \right\rangle-\sigma\|r_{h_{k+1}}^{k+1}\|_{L^{2}(\Omega)}^{2}\\
=&\frac{1}{2\tau\sigma}(\|\lambda_{h_{k+1}}^{\ast}-\lambda_{h_{k+1}}^{k}\|_{L^{2}(\Omega)}^{2}+\|\lambda_{h_{k+1}}^{k+1}-\lambda_{h_{k+1}}^{k}\|_{L^{2}(\Omega)}^{2}-\|\lambda_{h_{k+1}}^{\ast}-\lambda_{h_{k+1}}^{k+1}\|_{L^{2}(\Omega)}^{2})-\sigma\|r_{h_{k+1}}^{k+1}\|_{L^{2}(\Omega)}^{2}\\
=&\frac{1}{2\tau\sigma}(\|\lambda_{h_{k+1}}^{\ast}-\lambda_{h_{k+1}}^{k}\|_{L^{2}(\Omega)}^{2}-\|\lambda_{h_{k+1}}^{\ast}-\lambda_{h_{k+1}}^{k+1}\|_{L^{2}(\Omega)}^{2})+\frac{(\tau-2)\sigma}{2}\|r_{h_{k+1}}^{k+1}\|_{L^{2}(\Omega)}^{2},
\end{aligned}
\end{equation}
where we used the equality (\ref{equ1}).

Moreover, by employing the equality (\ref{equ2}) and using $u^{\ast}_{h_{k+1}}=z^{\ast}_{h_{k+1}}$, we have
\begin{equation}
\begin{aligned}
\label{estimate2}
&\sigma\left\langle z_{h_{k+1}}^{k+1}-z_{h_{k+1}}^{k}, u_{h_{k+1}}^{\ast}-u_{h_{k+1}}^{k+1} \right\rangle \\
=&\sigma\left\langle z_{h_{k+1}}^{k+1}-z_{h_{k+1}}^{k}, -u_{h_{k+1}}^{k+1}-(-z_{h_{k+1}}^{\ast}) \right\rangle \\
=&\frac{\sigma}{2}(\|r_{h_{k+1}}^{k+1}\|_{L^{2}(\Omega)}^{2}+\|z_{h_{k+1}}^{k}-z_{h_{k+1}}^{\ast}\|_{L^{2}(\Omega)}^{2}-\|z_{h_{k+1}}^{k+1}-z_{h_{k+1}}^{\ast}\|_{L^{2}(\Omega)}^{2}
-\|z_{h_{k+1}}^{k}-u_{h_{k+1}}^{k+1}\|_{L^{2}(\Omega)}^{2}).
\end{aligned}
\end{equation}
Then, substituting (\ref{estimate1}), (\ref{estimate2}) into (\ref{estimate}), we can get the assertion of Proposition \ref{lmm:complexity2}. 
\end{proof}

\begin{proposition}
\label{lmm:complexity3}
Let $\left\{\left(u^{k}_{h_{k}}, z^{k}_{h_{k}}, \lambda^{k}_{h_{k}}\right)\right\}$ be the sequence generated by Algorithm \ref{alg:multi-level ADMM algorithm}, $\left\{\left(u^{\ast}_{h_{k}}, z^{\ast}_{h_{k}}, \lambda^{\ast}_{h_{k}}\right)\right\}$ denotes the KKT point of the discretized reduced problem and $\tilde{u}_{h_{k+1}}^{k+1}$, $\tilde{z}^{k+1}_{h_{k+1}}$ 
defined in (\ref{tildeu}), (\ref{tildez}), respectively. Then for $k\geq 0$ we have
\begin{align*} 
&\frac{1}{2\tau\sigma}\|\lambda^{\ast}_{h_{k+1}}-\lambda^{k}_{h_{k+1}}\|_{L^{2}(\Omega)}^{2}+\frac{\sigma}{2}\|z^{\ast}_{h_{k+1}}-z^{k}_{h_{k+1}}\|_{L^{2}(\Omega)}^{2}-\frac{1}{2\tau\sigma}\|\lambda^{\ast}_{h_{k+1}}-\tilde \lambda^{k+1}_{h_{k+1}}\|_{L^{2}(\Omega)}^{2}-\frac{\sigma}{2}\|z^{\ast}_{h_{k+1}}-\tilde z^{k+1}_{h_{k+1}}\|_{L^{2}(\Omega)}^{2}\\
\geq & \alpha\|\tilde u^{k+1}_{h_{k+1}}-u^{\ast}_{h_{k+1}}\|_{L^{2}(\Omega)}^{2}+\frac{(3-\tau)\sigma}{2}\|\tilde u^{k+1}_{h_{k+1}}-\tilde z^{k+1}_{h_{k+1}}\|_{L^{2}(\Omega)}^{2}+\frac{\sigma}{2}\|\tilde u^{k+1}_{h_{k+1}}-z^{k}_{h_{k+1}}\|_{L^{2}(\Omega)}^{2}.
\end{align*}
\end{proposition}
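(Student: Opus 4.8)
The plan is to reproduce the argument of Proposition~\ref{lmm:complexity2} line by line, the sole difference being that $(\tilde u_{h_{k+1}}^{k+1},\tilde z_{h_{k+1}}^{k+1})$ defined in (\ref{tildeu})--(\ref{tildez}) solve the two subproblems \emph{exactly}, so that the error vector $\delta_{u,h_{k+1}}^{k+1}$ vanishes and the corresponding exact multiplier update $\tilde\lambda_{h_{k+1}}^{k+1}=\lambda_{h_{k+1}}^{k}+\tau\sigma(\tilde u_{h_{k+1}}^{k+1}-\tilde z_{h_{k+1}}^{k+1})$ replaces the one in Step~3. Because $\tilde u_{h_{k+1}}^{k+1}$ is an exact minimizer, its first-order optimality condition is (\ref{optimality1}) with $\delta_{u,h_{k+1}}^{k+1}=0$; this is precisely why the term $\langle\delta_{u,h_{k+1}}^{k+1},u_{h_{k+1}}^{k+1}-u_{h_{k+1}}^{\ast}\rangle$ that sits on the left-hand side of Proposition~\ref{lmm:complexity2} is absent from the present statement.

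First I would record the two optimality systems. For the exact subproblem solutions one has
\begin{equation*}
\nabla{\hat J_{h_{k+1}}}(\tilde u_{h_{k+1}}^{k+1})=-\bigl(\lambda_{h_{k+1}}^{k}+\sigma(\tilde u_{h_{k+1}}^{k+1}-z_{h_{k+1}}^{k})\bigr),\qquad \lambda_{h_{k+1}}^{k}+\sigma(\tilde u_{h_{k+1}}^{k+1}-\tilde z_{h_{k+1}}^{k+1})\in\partial\delta_{U_{ad,h_{k+1}}}(\tilde z_{h_{k+1}}^{k+1}),
\end{equation*}
while the KKT conditions (\ref{optimality2}) of the point $(u_{h_{k+1}}^{\ast},z_{h_{k+1}}^{\ast},\lambda_{h_{k+1}}^{\ast})$ for the problem (\ref{discretized reduced problem}) are unchanged. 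Pairing $\nabla{\hat J_{h_{k+1}}}(\tilde u_{h_{k+1}}^{k+1})-\nabla{\hat J_{h_{k+1}}}(u_{h_{k+1}}^{\ast})$ with $\tilde u_{h_{k+1}}^{k+1}-u_{h_{k+1}}^{\ast}$ reproduces, exactly as before, the quantity $\|S_{h_{k+1}}(\tilde u_{h_{k+1}}^{k+1}-u_{h_{k+1}}^{\ast})\|_{L^{2}(\Omega)}^{2}+\alpha\|\tilde u_{h_{k+1}}^{k+1}-u_{h_{k+1}}^{\ast}\|_{L^{2}(\Omega)}^{2}$, whose first, nonnegative summand is discarded to retain only the $\alpha$-term; maximal monotonicity of $\partial\delta_{U_{ad,h_{k+1}}}$ supplies the companion inequality for the $z$-variables. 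Setting $\tilde r_{h_{k+1}}^{k+1}:=\tilde u_{h_{k+1}}^{k+1}-\tilde z_{h_{k+1}}^{k+1}$ and adding the two inequalities (now with no $\delta$-contribution, using $u_{h_{k+1}}^{\ast}=z_{h_{k+1}}^{\ast}$) gives the exact analogue of (\ref{estimate}).

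Next I would treat the two cross terms with the polarization identities (\ref{equ1}) and (\ref{equ2}), exactly as in (\ref{estimate1}) and (\ref{estimate2}). Substituting the exact recursion $\tilde\lambda_{h_{k+1}}^{k+1}-\lambda_{h_{k+1}}^{k}=\tau\sigma\tilde r_{h_{k+1}}^{k+1}$ converts the multiplier cross term into the telescoping difference $\tfrac{1}{2\tau\sigma}\bigl(\|\lambda_{h_{k+1}}^{\ast}-\lambda_{h_{k+1}}^{k}\|_{L^{2}(\Omega)}^{2}-\|\lambda_{h_{k+1}}^{\ast}-\tilde\lambda_{h_{k+1}}^{k+1}\|_{L^{2}(\Omega)}^{2}\bigr)$ plus a multiple of $\|\tilde r_{h_{k+1}}^{k+1}\|_{L^{2}(\Omega)}^{2}$, while $u_{h_{k+1}}^{\ast}=z_{h_{k+1}}^{\ast}$ turns the $z$ cross term into the telescoping $z$-differences together with the remaining $\tfrac{\sigma}{2}$-terms. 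Collecting all contributions then yields the stated right-hand side with the same constants $\tfrac{(3-\tau)\sigma}{2}$ and $\tfrac{\sigma}{2}$.

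I do not expect any genuinely hard analytic step: the only departure from Proposition~\ref{lmm:complexity2} is the vanishing of $\delta_{u,h_{k+1}}^{k+1}$, so no new discretization or error estimate is needed, and the discrete operators $S_{h_{k+1}},S_{h_{k+1}}^{\ast}$ enter only through the nonnegative term that is dropped. The one point demanding care is purely organizational, namely to use the exact residual $\tilde r_{h_{k+1}}^{k+1}$ and the exact multiplier $\tilde\lambda_{h_{k+1}}^{k+1}$ consistently, so that both telescopings — in $\lambda$ and in $z$ — close and the coefficient of $\|\tilde r_{h_{k+1}}^{k+1}\|_{L^{2}(\Omega)}^{2}$ combines into the asserted value.
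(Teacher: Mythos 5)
Your proposal is correct and takes essentially the same route as the paper: the paper's entire proof of Proposition~\ref{lmm:complexity3} is the one-line remark that one substitutes $\tilde u^{k+1}_{h_{k+1}}$ and $\tilde z^{k+1}_{h_{k+1}}$ for $u^{k+1}_{h_{k+1}}$ and $z^{k+1}_{h_{k+1}}$ in the proof of Proposition~\ref{lmm:complexity2}, which is exactly your plan with the error vector $\delta^{k+1}_{u,h_{k+1}}$ set to zero. If anything, you are more careful than the paper, since you make explicit the exact multiplier update $\tilde\lambda^{k+1}_{h_{k+1}}=\lambda^{k}_{h_{k+1}}+\tau\sigma(\tilde u^{k+1}_{h_{k+1}}-\tilde z^{k+1}_{h_{k+1}})$, which the paper leaves implicit but which is precisely what makes the $\lambda$-telescoping in the stated inequality close.
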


\begin{proof}
For the proof of Proposition \ref{lmm:complexity3}, by substituting $\tilde u^{k+1}_{h_{k+1}}$ and $\tilde z^{k+1}_{h_{k+1}}$ for $u^{k+1}_{h_{k+1}}$ and $z^{k+1}_{h_{k+1}}$ in the proof of Proposition \ref{lmm:complexity2}, we can get the assertion.
\end{proof}
Finally, based on the above results, the convergence results of Algorithm \ref{alg:multi-level ADMM algorithm} is given by the following theorem.
\begin{theorem}\label{convergence theorem}
Suppose that the operator $L$ is uniformly elliptic. 
Let $(y^{\ast},u^{\ast},z^{\ast},p^{\ast},\lambda^{\ast})$ be the KKT point of {\rm(\ref{eqn:orginal problems})}, 
$(u^{k}_{h_{k}},z^{k}_{h_{k}},\lambda^{k}_{h_{k}})$ is obtained in the $k$th iterate of Algorithm \ref{alg:multi-level ADMM algorithm}, where we suppose the mesh sizes $\lbrace h_{k} \rbrace_{k=0}^{\infty}$ of each iteration satisfy $\sum_{k=0}^{\infty}h_{k+1}<\infty,$ and the error vector $\delta_{u,h_{k+1}}^{k+1}$ satisfies $\|\delta_{u,h_{k+1}}^{k+1}\|_{L^{2}(\Omega)}\leq \xi_{k+1}$, $\sum_{k=0}^{\infty}\xi_{k+1}<\infty.$
Then we have
\begin{align*}
&\lim \limits_{k\rightarrow \infty}\lbrace \|u^{k}_{h_{k}}-u^{\ast}\|_{L^{2}(\Omega)}+\|z^{k}_{h_{k}}-z^{\ast}\|_{L^{2}(\Omega}+\|\lambda^{k}_{h_{k}}-\lambda^{\ast}\|_{L^{2}(\Omega)}\rbrace=0,\\
&\lim \limits_{k\rightarrow \infty}\lbrace \|y^{k}_{h_{k}}-y^{\ast}\|_{H_{0}^{1}(\Omega)}+\|p^{k}_{h_{k}}-p^{\ast}\|_{H_{0}^{1}(\Omega)}\rbrace=0.
\end{align*}
Moreover, there exists a constant $\tilde C$ only depends on the initial point $(u^{0}, z^{0}, \lambda^{0})$ and the optimal solution $(u^{\ast}, z^{\ast}, \lambda^{\ast})$ such that for $k\geq 1$,
\begin{equation*}
\min_{1\leq i\leq k}{R_{h_{i}}}(u^{i}_{h_{i}}, z^{i}_{h_{i}}, \lambda^{i}_{h_{i}})\leq \frac{\tilde C}{k}, \ \ \lim \limits_{k\rightarrow \infty}(k\times \min_{1\leq i\leq k}{R_{h_{i}}}(u^{i}_{h_{i}}, z^{i}_{h_{i}}, \lambda^{i}_{h_{i}}))=0,
\end{equation*}
where $R_{h_{i}}: (u^{i}_{h_{i}}, z^{i}_{h_{i}}, \lambda^{i}_{h_{i}})\rightarrow [0,\infty)$ is defined as 
\begin{equation*}
R_{h_{i}}(u^{i}_{h_{i}}, z^{i}_{h_{i}}, \lambda^{i}_{h_{i}}):=\|\nabla {\hat J_{h_{i}}}(u^{i}_{h_{i}})+\lambda_{h_{i}}^{i-1}\|^{2}_{L^{2}(\Omega)}+{\rm dist}^{2}(0,-\lambda_{h_{i}}^{i-1}+\partial \delta_{U_{ad,h_{i}}}(z^{i}_{h_{i}}))+\|u^{i}_{h_{i}}-z^{i}_{h_{i}}\|^{2}_{L^{2}(\Omega)}.
\end{equation*}
\end{theorem}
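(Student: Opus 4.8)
The plan is to split the statement into two independent parts: the asymptotic convergence of $(u^k_{h_k},z^k_{h_k},\lambda^k_{h_k})$ together with the associated state and adjoint state, and the non-ergodic complexity bound on the residual $R_{h_i}$. The convergence part will be obtained by comparing the mADMM iterates with the iterates of the inexact ADMM in function space, for which Theorem \ref{convergence functional space} already supplies convergence; the complexity part will be reduced, via Lemma \ref{lmm:complexity1}, to proving the summability $\sum_{i\geq 1} R_{h_i}(u^i_{h_i},z^i_{h_i},\lambda^i_{h_i})<\infty$.

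First I would establish the limits. By the triangle inequality $\|u^k_{h_k}-u^\ast\|_{L^2(\Omega)}\leq \|u^k_{h_k}-u^k\|_{L^2(\Omega)}+\|u^k-u^\ast\|_{L^2(\Omega)}$, and similarly for $z$ and $\lambda$. The second summand tends to $0$ by Theorem \ref{convergence functional space}. For the first, Lemma \ref{lem:continuous inexact and discretized inexact} gives $\|u^k-u^k_{h_k}\|_{L^2(\Omega)}=O(h_k+\|\delta^k_{u,h_k}\|_{L^2(\Omega)})$, which tends to $0$ because $\sum_k h_{k+1}<\infty$ and $\sum_k\xi_{k+1}<\infty$ force $h_k\to0$ and $\|\delta^k_{u,h_k}\|_{L^2(\Omega)}\leq\xi_k\to0$. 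Adding these two estimates yields the first displayed limit. The state and adjoint-state limits then follow by writing $y^k_{h_k}=S_{h_k}(u^k_{h_k}+I_{h_k}y_r)$, inserting $S(u^\ast+y_r)$, and estimating the difference by $\|S-S_{h_k}\|_{L^2\to H^1}\,\|u^k_{h_k}+I_{h_k}y_r\|_{L^2(\Omega)}+\|S\|_{L^2\to H^1}(\|u^k_{h_k}-u^\ast\|_{L^2(\Omega)}+\|I_{h_k}y_r-y_r\|_{L^2(\Omega)})$, which is $O(h_k)+o(1)\to0$ by Lemma \ref{lem:error estimates} and Lemma \ref{lem:interpolation error estimate}; the same reasoning handles $p$.

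For the complexity bound I would first control $R_{h_{k+1}}$ termwise by quantities the propositions make summable. The $u$-optimality identity (\ref{optimality1}) gives $\nabla\hat J_{h_{k+1}}(u^{k+1}_{h_{k+1}})+\lambda^k_{h_{k+1}}=\delta^{k+1}_{u,h_{k+1}}-\sigma(u^{k+1}_{h_{k+1}}-z^k_{h_{k+1}})$, while the $z$-optimality inclusion yields $\sigma(u^{k+1}_{h_{k+1}}-z^{k+1}_{h_{k+1}})\in-\lambda^k_{h_{k+1}}+\partial\delta_{U_{ad,h_{k+1}}}(z^{k+1}_{h_{k+1}})$; hence $R_{h_{k+1}}$ is bounded by a fixed multiple of $\|\delta^{k+1}_{u,h_{k+1}}\|^2_{L^2(\Omega)}+\|u^{k+1}_{h_{k+1}}-z^k_{h_{k+1}}\|^2_{L^2(\Omega)}+\|u^{k+1}_{h_{k+1}}-z^{k+1}_{h_{k+1}}\|^2_{L^2(\Omega)}$. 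The first term is summable since $\sum_k\xi^2_{k+1}<\infty$, and by Lemma \ref{lmm:tildeu} the last two can be replaced, up to summable $O(\|\delta^{k+1}_{u,h_{k+1}}\|^2)$ corrections, by $\|\tilde u^{k+1}_{h_{k+1}}-z^k_{h_{k+1}}\|^2$ and $\|\tilde u^{k+1}_{h_{k+1}}-\tilde z^{k+1}_{h_{k+1}}\|^2$, which appear with positive coefficients on the right-hand side of Proposition \ref{lmm:complexity3} (positivity of $(3-\tau)\sigma/2$ uses $\tau<(1+\sqrt5)/2<3$). Thus $\sum_k R_{h_{k+1}}<\infty$ follows once the right-hand sides of Proposition \ref{lmm:complexity3} are shown to sum to a finite value, after which Lemma \ref{lmm:complexity1} delivers both $\min_{1\leq i\leq k}R_{h_i}\leq\tilde C/k$ and $k\min_{1\leq i\leq k}R_{h_i}\to0$.

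The hard part is summing the left-hand sides of Proposition \ref{lmm:complexity3}. Writing $\Phi_{k+1}:=\tfrac{1}{2\tau\sigma}\|\lambda^\ast_{h_{k+1}}-\lambda^k_{h_{k+1}}\|^2_{L^2(\Omega)}+\tfrac{\sigma}{2}\|z^\ast_{h_{k+1}}-z^k_{h_{k+1}}\|^2_{L^2(\Omega)}$ and $\tilde\Phi_{k+1}$ for the analogous quantity built from $\tilde\lambda^{k+1}_{h_{k+1}},\tilde z^{k+1}_{h_{k+1}}$, each left-hand side equals $\Phi_{k+1}-\tilde\Phi_{k+1}$, so $\sum_{k=0}^N(\Phi_{k+1}-\tilde\Phi_{k+1})=\Phi_1-\tilde\Phi_{N+1}+\sum_{k=1}^N(\Phi_{k+1}-\tilde\Phi_k)$. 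In a fixed-mesh ADMM the defect $\Phi_{k+1}-\tilde\Phi_k$ would vanish, but here it does not, because (i) the discrete KKT points $\lambda^\ast_{h_{k+1}},z^\ast_{h_{k+1}}$ live on a mesh that changes each iteration, (ii) $\lambda^k_{h_{k+1}}=I_{h_{k+1}}\lambda^k_{h_k}$ is an interpolation of the previous iterate, and (iii) $\tilde\lambda^k,\tilde z^k$ are exact-subproblem values rather than the inexact iterates. The crux of the proof will be to bound $|\Phi_{k+1}-\tilde\Phi_k|$ by a summable sequence: the exactness gap is controlled by Lemma \ref{lmm:tildeu} ($O(\|\delta^k_{u,h_k}\|)$), the interpolation gap by Lemma \ref{lem:interpolation error estimate} ($O(h_{k+1})$), and the distances $\|\lambda^\ast_{h_{k+1}}-\lambda^\ast_{h_k}\|,\|z^\ast_{h_{k+1}}-z^\ast_{h_k}\|$ between consecutive discrete KKT points by finite element error estimates of Lemma \ref{lem:error estimates} ($O(h_k)$); multiplying these $O(h_k+h_{k+1}+\|\delta^k_{u,h_k}\|)$ factors by the uniformly bounded iterate norms (boundedness following from the convergence established above) and using $\sum_k h_{k+1}<\infty$, $\sum_k\xi_{k+1}<\infty$ makes the defect series converge absolutely. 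Since $\Phi_1$ depends only on the initial data and the optimal solution and $\tilde\Phi_{N+1}\geq0$, the partial sums stay bounded, giving $\sum_k(\Phi_{k+1}-\tilde\Phi_{k+1})<\infty$ and closing the argument. I expect the cross-mesh consistency estimate on $\|\lambda^\ast_{h_{k+1}}-\lambda^\ast_{h_k}\|$ and the uniform boundedness of all iterate sequences to be the most delicate points to make fully rigorous.
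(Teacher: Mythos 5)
Your proposal is correct in substance and shares the paper's overall architecture---convergence by comparison with the inexact ADMM in function space, complexity by reduction via Lemma \ref{lmm:complexity1} to summability of the residuals---but the bookkeeping differs at both stages. For the limits, you apply Theorem \ref{convergence functional space} to the function-space sequence and bridge to the discrete iterates by the triangle inequality and Lemma \ref{lem:continuous inexact and discretized inexact}; the paper instead shows that the total residual $\delta_u^{k+1}$ of the mADMM iterate, viewed as an inexact step of the function-space ADMM, satisfies $\sum_k\|\delta_u^{k+1}\|_{L^{2}(\Omega)}<\infty$ (combining Lemma \ref{lem:continuous exact and discretized exact}, Lemma \ref{lem:continuous inexact and discretized inexact} and the operator estimate $\|(S^{\ast}S-S^{\ast}_{h}S_{h})v\|_{L^{2}(\Omega)}=O(h^{2})$), and then invokes Theorem \ref{convergence functional space} directly on the discrete sequence; the two are equivalent and rest on the same lemmas. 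For the complexity, the paper swaps the roles you assign to the two propositions: it telescopes \emph{norms} (not squares) using Proposition \ref{lmm:complexity3} together with Lemma \ref{lmm:tildeu} and the interpolation estimate, obtaining the uniform bound $\|\theta^{k}\|_{L^{2}(\Omega)}\le C_1'$, and then performs the summation with Proposition \ref{lmm:complexity2}, whose extra term $\langle\delta^{k+1}_{u,h_{k+1}},u^{k+1}_{h_{k+1}}-u^{\ast}_{h_{k+1}}\rangle$ is bounded by $\bar\eta\,\|\delta^{k+1}_{u,h_{k+1}}\|_{L^{2}(\Omega)}$ via that uniform bound, so no tilde-to-actual transfer of the summands is needed; the cross-mesh drift enters only through $\|\theta^{k+1}\|^{2}_{L^{2}(\Omega)}-\|\bar\theta^{k+1}\|^{2}_{L^{2}(\Omega)}\le\|\theta^{k+1}+\bar\theta^{k+1}\|_{L^{2}(\Omega)}\|\theta^{k+1}-\bar\theta^{k+1}\|_{L^{2}(\Omega)}=O(h_{k+2})$. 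Your version telescopes Proposition \ref{lmm:complexity3} itself with the defect $\Phi_{k+1}-\tilde\Phi_{k}$ and avoids the inner-product estimate, at the price of transferring from tilde to actual iterates afterwards via Lemma \ref{lmm:tildeu}; both routes must control the same three drift sources (subproblem inexactness, interpolation to the new mesh, change of mesh in the anchor points). One caution: the consecutive-KKT-point estimate $\|\lambda^{\ast}_{h_{k+1}}-\lambda^{\ast}_{h_{k}}\|_{L^{2}(\Omega)}=O(h_{k})$ you invoke does not follow from Lemma \ref{lem:error estimates} alone, which bounds only $\|S-S_{h}\|$; you need an error estimate for the discrete KKT points of (\ref{discretized reduced problem}) themselves, which is standard for control-constrained elliptic problems but must be stated. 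Note, however, that the paper's own bookkeeping silently requires the same consistency---its estimate of $\|\tilde\theta^{k+1}-\theta^{k+1}\|_{L^{2}(\Omega)}$ compares quantities anchored at $\lambda^{\ast}_{h_{k+1}}$ and $\lambda^{\ast}_{h_{k+2}}$ without comment---so this is a point to be made rigorous in either approach rather than a defect specific to yours.
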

\begin{proof}
By the optimality condition of the $u$-subproblem in the ADMM in function space, we have
\begin{eqnarray} 
S^{\ast}[S(\bar{u}^{k+1}+y_{r})-y_{d}]+\alpha\bar{u}^{k+1}+\bar \lambda^{k}+\sigma(\bar{u}^{k+1}-\bar z^{k})=0.
\end{eqnarray} 
As we know, the error between the inexact solution and exact solution contains two parts, error from gradually refining the grid and error from the inexactly solving the subproblems. We take them into consideration together as a total error, let $u_{h_{k+1}}^{k+1}$ represents the inexact solution of the $(k+1)$th iteration, then from the optimality condition of the $u$-subproblem, we have
\begin{equation}
S^{\ast}[S(u_{h_{k+1}}^{k+1}+y_{r})-y_{d}]+\alpha u_{h_{k+1}}^{k+1}+\lambda^{k}+\sigma(u_{h_{k+1}}^{k+1}-z^{k})=\delta_{u}^{k+1}.
\end{equation}
Moreover, by the optimality conditions of the $u-$subproblem corresponding to $u_{h_{k+1}}^{k+1}$ and $\bar u_{h_{k+1}}^{k+1}$ in Algorithm \ref{alg:multi-level ADMM algorithm} and multi-level discretized ADMM, we have
\begin{align}
&S^{\ast}_{h_{k+1}}[S_{h_{k+1}}(u_{h_{k+1}}^{k+1}+I_{h_{k+1}}y_{r})-I_{h_{k+1}}y_{d}]+\alpha u_{h_{k+1}}^{k+1}+\lambda^{k}_{h_{k+1}}+\sigma(u_{h_{k+1}}^{k+1}-z^{k}_{h_{k+1}})=\delta_{u,h_{k+1}}^{k+1},\\
&S^{\ast}_{h_{k+1}}[S_{h_{k+1}}(\bar u_{h_{k+1}}^{k+1}+I_{h_{k+1}}y_{r})-I_{h_{k+1}}y_{d}]+\alpha \bar u_{h_{k+1}}^{k+1}+\bar \lambda^{k}_{h_{k+1}}+\sigma(\bar u_{h_{k+1}}^{k+1}-\bar z^{k}_{h_{k+1}})=0.
\end{align}

Then we know from the four equalities above that 
\begin{equation}
\begin{aligned}
\delta_{u}^{k+1}=&\delta_{u}^{k+1}-\delta_{u,h_{k+1}}^{k+1}+\delta_{u,h_{k+1}}^{k+1}\\
=&\delta_{u,h_{k+1}}^{k+1}+S^{\ast}S(u_{h_{k+1}}^{k+1}-\bar u^{k+1})+(\alpha+\sigma)(u_{h_{k+1}}^{k+1}-\bar u^{k+1})+(\lambda^{k}-\bar \lambda^{k})-\sigma(z^{k}-\bar z^{k})\\
&+S^{\ast}_{h_{k+1}}S_{h_{k+1}}(\bar u_{h_{k+1}}^{k+1}-u_{h_{k+1}}^{k+1})+(\alpha+\sigma)(\bar u_{h_{k+1}}^{k+1}-u_{h_{k+1}}^{k+1})+(\bar \lambda^{k}_{h_{k+1}}-\lambda^{k}_{h_{k+1}})-\sigma(\bar z^{k}_{h_{k+1}}-z^{k}_{h_{k+1}})\\
=&\delta_{u,h_{k+1}}^{k+1}+(\alpha+\sigma)(\bar u_{h_{k+1}}^{k+1}-\bar u^{k+1})+
(\lambda^{k}-\lambda^{k}_{h_{k+1}})-\sigma(z^{k}-z^{k}_{h_{k+1}})+(\bar \lambda^{k}_{h_{k+1}}-\bar \lambda^{k})-\sigma(\bar z^{k}_{h_{k+1}}-\bar z^{k})\\
&+(S^{\ast}S-S^{\ast}_{h_{k+1}}S_{h_{k+1}})(u_{h_{k+1}}^{k+1}-\bar u_{h_{k+1}}^{k+1})+S^{\ast}S(\bar u_{h_{k+1}}^{k+1}-\bar u^{k+1}).
\end{aligned}
\end{equation} 
Moreover, we have the estimate 
\begin{equation}
\begin{aligned}
\label{equ:estimate}
&\|(S^{\ast}S-S^{\ast}_{h_{k+1}}S_{h_{k+1}})(u_{h_{k+1}}^{k+1}-\bar u_{h_{k+1}}^{k+1})\|_{L^{2}(\Omega)}\\
=&\|(S^{\ast}S-S^{\ast}S_{h_{k+1}})(u_{h_{k+1}}^{k+1}-\bar u_{h_{k+1}}^{k+1})\|_{L^{2}(\Omega)}
+\|(S^{\ast}S_{h_{k+1}}-S^{\ast}_{h_{k+1}}S_{h_{k+1}})(u_{h_{k+1}}^{k+1}-\bar u_{h_{k+1}}^{k+1})\|_{L^{2}(\Omega)}\\
\leq &\|S^{\ast}\|\|S-S_{h_{k+1}}\|\|u_{h_{k+1}}^{k+1}-\bar u_{h_{k+1}}^{k+1}\|_{L^{2}(\Omega)}+\|S^{\ast}-S^{\ast}_{h_{k+1}}\|\|S_{h_{k+1}}\|\|u_{h_{k+1}}^{k+1}-\bar u_{h_{k+1}}^{k+1}\|_{L^{2}(\Omega)}\\
= &O(h_{k+1}^{2}).
\end{aligned}
\end{equation} 

Then we know from (\ref{equ:estimate}), Lemma \ref{lem:continuous exact and discretized exact} and Lemma \ref{lem:continuous inexact and discretized inexact} that there exists constant C such that
\begin{equation}\label{equ: delta}
\begin{aligned}
\sum_{k=0}^{\infty}\|\delta_{u}^{k+1}\|_{L^{2}(\Omega)}
\leq \sum_{k=0}^{\infty}\|\delta_{u,h_{k+1}}^{k+1}\|_{L^{2}(\Omega)}+C\sum_{k=0}^{\infty}\left(h_{k+1}+\|\delta_{u,h_{k+1}}^{k+1}\|_{L^{2}(\Omega)}\right).
\end{aligned}
\end{equation} 
We know from Algorithm \ref{alg:multi-level ADMM algorithm} that the mesh sizes $\lbrace h_{k} \rbrace_{k=0}^{\infty}$ of each iteration satisfy 
$\sum_{k=0}^{\infty}h_{k+1}<\infty.$ The error vector of the multi-level ADMM satisfy
$\sum_{k=0}^{\infty}\|\delta_{u,h_{k+1}}^{k+1}\|_{L^{2}(\Omega)}\leq\sum_{k=0}^{\infty}\xi_{k+1}<\infty,$
where $\xi_{k+1}$ is the upper bound of $\delta_{u,h_{k+1}}^{k+1}$, i.e. $\|\delta_{u,h_{k+1}}^{k+1}\|_{L^{2}(\Omega)}\leq \xi_{k+1}$. Thus we have
\begin{equation}
\sum_{k=0}^{\infty}\|\delta_{u}^{k+1}\|_{L^{2}(\Omega)}
< \infty.
\end{equation} 
For the discretization error and the iteration error, Algorithm \ref{alg:multi-level ADMM algorithm} can be considered as an inexact ADMM algorithm in function space, then we know from Theorem \ref{convergence functional space} that the convergence of Algorithm \ref{alg:multi-level ADMM algorithm} is guaranteed. 

At last, we establish the proof of the iteration complexity results for the sequence generated by the mADMM.
First, by the optimality condition for $(u^{k+1}_{h_{k+1}},z^{k+1}_{h_{k+1}})$, we have
\begin{equation}
\begin{aligned}
&\delta_{u,h_{k+1}}^{k+1}-[\lambda^{k}_{h_{k+1}}+\sigma(u^{k+1}_{h_{k+1}}-z^{k}_{h_{k+1}})]=\nabla \hat J_{h_{k+1}}(u^{k+1}_{h_{k+1}}),\\
&\lambda^{k}_{h_{k+1}}+\sigma(u^{k+1}_{h_{k+1}}-z^{k+1}_{h_{k+1}})\in \partial \delta_{U_{ad},h_{k+1}}(z^{k+1}_{h_{k+1}}).
\end{aligned}
\end{equation}
Then by the definition of $R_{h_{k}}$, we derive
\begin{equation}\label{R inequality}
\begin{aligned}
R_{h_{k+1}}(u^{k+1}_{h_{k+1}},z^{k+1}_{h_{k+1}},\lambda^{k+1}_{h_{k+1}})=&\|\nabla \hat J_{h_{k+1}}(u^{k+1}_{h_{k+1}})+\lambda^{k}_{h_{k+1}}\|_{L^{2}(\Omega)}^{2}+{\rm dist}^{2}(0,-\lambda^{k}_{h_{k+1}}+\partial \delta_{U_{ad},h_{k+1}}(z^{k+1}_{h_{k+1}}))
\\&
+\|u^{k+1}_{h_{k+1}}-z^{k+1}_{h_{k+1}}\|_{L^{2}(\Omega)}^{2}\\
\leq &\|\delta_{u,h_{k+1}}^{k+1}-\sigma(u^{k+1}_{h_{k+1}}-z^{k}_{h_{k+1}})\|_{L^{2}(\Omega)}^{2}+(1+\sigma^{2})\|u^{k+1}_{h_{k+1}}-z^{k+1}_{h_{k+1}}\|_{L^{2}(\Omega)}^{2}\\
\leq & 2\|\delta_{u,h_{k+1}}^{k+1}\|_{L^{2}(\Omega)}^{2}+2\sigma^{2}\|u^{k+1}_{h_{k+1}}-z^{k}_{h_{k+1}}\|_{L^{2}(\Omega)}^{2}+(1+\sigma^{2})\|u^{k+1}_{h_{k+1}}-z^{k+1}_{h_{k+1}}\|_{L^{2}(\Omega)}^{2}.
\end{aligned}
\end{equation}

Next, for the convenience of giving an upper bound of $R_{h_{k+1}}(u^{k+1}_{h_{k+1}},z^{k+1}_{h_{k+1}},\lambda^{k+1}_{h_{k+1}})$, we define the following sequence $\theta_{k}$, $\bar \theta_{k}$ and $\tilde \theta_{k}$ with:
\begin{align}
&\theta^{k}=\left(\frac{1}{\sqrt{2 \tau \sigma}} \left( \lambda^{k}_{h_{k+1}}-\lambda^{\ast}_{h_{k+1}}\right), \sqrt{\frac{\sigma}{2}} \left(z^{k}_{h_{k+1}}-z^{\ast}_{h_{k+1}}\right)\right),\\
&\bar \theta^{k}=\left(\frac{1}{\sqrt{2 \tau \sigma}} \left( \lambda^{k}_{h_{k}}-\lambda^{\ast}_{h_{k}}\right), \sqrt{\frac{\sigma}{2}} \left(z^{k}_{h_{k}}-z^{\ast}_{h_{k}}\right)\right),\\
&\tilde \theta^{k}=\left(\frac{1}{\sqrt{2 \tau \sigma}} \left(\tilde \lambda^{k}_{h_{k}}-\lambda^{\ast}_{h_{k}}\right), \sqrt{\frac{\sigma}{2}} \left(\tilde z^{k}_{h_{k}}-z^{\ast}_{h_{k}}\right)\right).
\end{align}
First, we give an upper bound of $\theta^{k}$. We know from Lemma \ref{lmm:complexity3} that 
$\|\tilde \theta^{k+1}\|_{L^{2}(\Omega)}\leq \|\theta^{k}\|_{L^{2}(\Omega)},$ so 
\begin{equation}
\begin{aligned}
\|\theta^{k+1}\|_{L^{2}(\Omega)}&\leq \|\tilde \theta^{k+1}\|_{L^{2}(\Omega)}+\|\tilde \theta^{k+1}-\theta^{k+1}\|_{L^{2}(\Omega)},\\
&\leq \|\theta^{k}\|_{L^{2}(\Omega)}+\|\tilde \theta^{k+1}-\theta^{k+1}\|_{L^{2}(\Omega)}.
\end{aligned}
\end{equation}
We know from the definition of $\tilde \theta^{k+1}$, $\theta^{k+1}$ and Lemma \ref{lmm:tildeu} that
\begin{equation}
\label{theta}
\begin{aligned}
\|\tilde \theta^{k+1}-\theta^{k+1}\|_{L^{2}(\Omega)}=&\frac{1}{2 \tau \sigma}\left\|\tilde{\lambda}^{k+1}_{h_{k+1}}-\lambda^{k+1}_{h_{k+2}}\right\|_{L^{2}(\Omega)}+\frac{\sigma}{2}\left\|\tilde{z}^{k+1}_{h_{k+1}}-z^{k+1}_{h_{k+2}}\right\|_{L^{2}(\Omega)}\\
\leq& \frac{1}{2 \tau \sigma}\left\|\tilde{\lambda}^{k+1}_{h_{k+1}}-\lambda^{k+1}_{h_{k+1}}\right\|_{L^{2}(\Omega)}+\frac{1}{2 \tau \sigma}\left\|\lambda^{k+1}_{h_{k+1}}-I_{h_{k+2}}\lambda^{k+1}_{h_{k+1}}\right\|_{L^{2}(\Omega)}\\
&+\frac{\sigma}{2}\left\|\tilde{z}^{k+1}_{h_{k+1}}-z^{k+1}_{h_{k+1}}\right\|_{L^{2}(\Omega)}+\frac{\sigma}{2}\left\|z^{k+1}_{h_{k+1}}-I_{h_{k+2}}z^{k+1}_{h_{k+1}}\right\|_{L^{2}(\Omega)}\\
\leq& \frac{1}{2 \tau \sigma}\|\tilde{u}^{k+1}_{h_{k+1}}-{u}^{k+1}_{h_{k+1}}\|_{L^{2}(\Omega)}+\frac{\sigma}{2}\left\|\tilde{z}^{k+1}_{h_{k+1}}-z^{k+1}_{h_{k+1}}\right\|_{L^{2}(\Omega)}+{\tilde C_{1}}h_{k+2}\\
\leq& (\frac{1}{\tau \sigma}+\frac{\sigma}{2})\rho\|\delta_{u,h_{k+1}}^{k+1}\|_{L^{2}(\Omega)}+{\tilde C_{1}}h_{k+2},
\end{aligned}
\end{equation}
where $\tilde C_{1}$ is a constant.
So there exists a constant $C'_{1}$ such that for every $k$
\begin{equation}
\begin{aligned}
\label{es1}
\|\theta^{k+1}\|_{L^{2}(\Omega)}&\leq \|\theta^{k}\|_{L^{2}(\Omega)}+\|\tilde \theta^{k+1}-\theta^{k+1}\|_{L^{2}(\Omega)}\\
&\leq \|\theta^{k}\|_{L^{2}(\Omega)}+\left(\frac{1}{\tau \sigma}+\frac{\sigma}{2}\right)\rho\|\delta_{u,h_{k+1}}^{k+1}\|_{L^{2}(\Omega)}+{\tilde C_{1}}h_{k+2},\\
&\leq  \|\theta^{k-1}\|_{L^{2}(\Omega)}+\left(\frac{1}{\tau \sigma}+\frac{\sigma}{2}\right)\rho\|\delta_{u,h_{k}}^{k}\|_{L^{2}(\Omega)}+{\tilde C_{1}}h_{k+1}+\left(\frac{1}{\tau \sigma}+\frac{\sigma}{2}\right)\rho\|\delta_{u,h_{k+1}}^{k+1}\|_{L^{2}(\Omega)}+{\tilde C_{1}}h_{k+2}\\
&\leq \cdots\\
&\leq\|\theta^{0}\|_{L^{2}(\Omega)}+\left(\frac{1}{\tau \sigma}+\frac{\sigma}{2}\right)\rho\sum_{i=0}^{k}\|\delta_{u,h_{i+1}}^{i+1}\|_{L^{2}(\Omega)}+{\tilde C_{1}}\sum_{i=0}^{k}h_{i+2}\\
&<C'_{1}.
\end{aligned}
\end{equation}
By Lemma \ref{lmm:complexity2}, we have
\begin{equation}
\label{es2}
\|\bar \theta^{k+1}\|_{L^{2}(\Omega)}^{2}\leq \|\theta^{k}\|_{L^{2}(\Omega)}^{2}+ \bar \eta \|\delta^{k}_{u,h_{k+1}}\|_{L^{2}(\Omega)}\leq (C'_{1})^{2}+\bar \eta \|\delta^{k+1}_{u,h_{k+1}}\|_{L^{2}(\Omega)},
\end{equation}
so there exists a constant $C'_{2}$ such that
$\|\bar \theta^{k+1}\|_{L^{2}(\Omega)}\leq C'_{2}.$
Hence,
\begin{equation}
\left\|\theta^{k+1}+\bar \theta^{k+1}\right\|_{L^{2}(\Omega)}\leq \left\|\theta^{k+1}\right\|_{L^{2}(\Omega)}+\left\|\bar \theta^{k+1}\right\|_{L^{2}(\Omega)}<C'_{1}+C'_{2}.
\end{equation}
By the definition of $\theta_{k}$ and $\bar \theta_{k}$, we have
\begin{equation}
\| \theta^{k+1}-\bar \theta^{k+1}\|_{L^{2}(\Omega)}^{2}=
\|\left(\frac{1}{\sqrt{2 \tau \sigma}}\left(\lambda^{k+1}_{h_{k+1}}-\lambda^{k+1}_{h_{k+2}}\right), \sqrt\frac{\sigma}{2}\left(z^{k+1}_{h_{k+2}}-z^{k+1}_{h_{k+1}}\right)\right)\|_{L^{2}(\Omega)}^{2}=O(h_{k+2}^{2}),
\end{equation}
thus 
$\| \theta^{k+1}-\bar \theta^{k+1}\|_{L^{2}(\Omega)}=O(h_{k+2}).$

Moreover, we have the estimate that
\begin{equation}
\begin{aligned}
\left\langle\delta^{k+1}_{u,h_{k+1}}, u^{k+1}_{h_{k+1}}-u^{\ast}_{h_{k+1}}\right\rangle &\leq \left\langle\delta^{k+1}_{u,h_{k+1}}, u^{k+1}_{h_{k+1}}-z^{k+1}_{h_{k+1}}+z^{k+1}_{h_{k+1}}-z^{\ast}_{h_{k+1}}\right\rangle\\
&=\left\langle\delta^{k+1}_{u,h_{k+1}}, \frac{1}{\tau\sigma}(\lambda^{k+1}_{h_{k+1}}-\lambda^{\ast}_{h_{k+1}}+\lambda^{\ast}_{h_{k+1}}-\lambda^{k}_{h_{k+1}})\right\rangle-\left\langle\delta^{k+1}_{u,h_{k+1}}, z^{k+1}_{h_{k+1}}-z^{\ast}_{h_{k+1}}\right\rangle\\
&\leq \frac{1}{\tau\sigma}\|\delta^{k+1}_{u,h_{k+1}}\|_{L^{2}(\Omega)}\left(\|\lambda^{k+1}_{h_{k+1}}-\lambda^{\ast}_{h_{k+1}}\|_{L^{2}(\Omega)}+\|\lambda^{\ast}_{h_{k+1}}-\lambda^{k}_{h_{k+1}}\|_{L^{2}(\Omega)}+\|z^{k+1}_{h_{k+1}}-z^{\ast}_{h_{k+1}}\|_{L^{2}(\Omega)}\right)\\
&\leq \bar \eta \|\delta^{k+1}_{u,h_{k+1}}\|_{L^{2}(\Omega)}.
\end{aligned}
\end{equation}
where $\bar \eta:=\sqrt{\frac{2C_{1}'}{\tau\sigma}}+\sqrt{\left(1+\frac{1}{\tau}\right)\frac{2C_{2}'}{\sigma}}$ is a constant, we used (\ref{es1}), (\ref{es2}) and the property $u^{\ast}_{h_{k+1}}=z^{\ast}_{h_{k+1}}$.

Then we know from Lemma \ref{lmm:complexity2} that 
\begin{equation}
\begin{aligned}
&\sum_{k=0}^{\infty}\left(\frac{\sigma}{2}\|u^{k+1}_{h_{k+1}}-z^{k+1}_{h_{k+1}}\|_{L^{2}(\Omega)}^{2}+\frac{\sigma}{2}\|u^{k+1}_{h_{k+1}}-z^{k}_{h_{k+1}}\|_{L^{2}(\Omega)}^{2}\right)\\
\leq& \sum_{k=0}^{\infty}\left(\left\|\theta^{k}\right\|^{2}_{L^{2}(\Omega)}-\left\|\theta^{k+1}\right\|^{2}_{L^{2}(\Omega)}+\left\|\theta^{k+1}\right\|^{2}_{L^{2}(\Omega)}-\left\|\bar \theta^{k+1}\right\|^{2}_{L^{2}(\Omega)}\right)+\sum_{k=0}^{\infty}\left\langle\delta^{k+1}_{u,h_{k+1}}, u^{k+1}_{h_{k+1}}-u^{\ast}_{h_{k+1}}\right\rangle
\\
\leq & \left\|\theta^{0}\right\|^{2}_{L^{2}(\Omega)}+\bar \eta \sum_{k=0}^{\infty}\|\delta^{k+1}_{u,h_{k+1}}\|_{L^{2}(\Omega)}+\sum_{k=0}^{\infty}(C'_{1}+C'_{2})\cdot{\rm max}_{k}(\|\bar \theta^{k+1}-\theta^{k+1}\|_{L^{2}(\Omega)})
\\
< &\infty,
\end{aligned}
\end{equation}
where we used the property $\left\|\theta^{k+1}\right\|^{2}_{L^{2}(\Omega)}-\left\|\bar \theta^{k+1}\right\|^{2}_{L^{2}(\Omega)}\leq \|\bar \theta^{k+1}+\theta^{k+1}\|_{L^{2}(\Omega)}\cdot\|\bar \theta^{k+1}-\theta^{k+1}\|_{L^{2}(\Omega)}$ and (\ref{theta}).
Hence, there exist constants $\tilde C_{1}, \tilde C_{2}$ such that
\begin{equation}\label{equ:bounded}
\sum_{k=0}^{\infty}\|u^{k+1}_{h_{k+1}}-z^{k+1}_{h_{k+1}}\|_{L^{2}(\Omega)}^{2}\leq \tilde C_{1},\ \ \sum_{k=0}^{\infty}\|u^{k+1}_{h_{k+1}}-z^{k}_{h_{k+1}}\|_{L^{2}(\Omega)}^{2}\leq \tilde C_{2}.
\end{equation}

Finally, by substituting (\ref{equ:bounded}) to (\ref{R inequality}), there exists a constant $\tilde C$,
\begin{equation}
\sum_{k=0}^{\infty}R_{h_{k+1}}(u^{k+1}_{h_{k+1}},z^{k+1}_{h_{k+1}},\lambda^{k+1}_{h_{k+1}})\leq 2
\sum_{k=0}^{\infty}\|\delta_{u,h_{k+1}}^{k+1}\|_{L^{2}(\Omega)}^{2}+2\sigma^{2}\tilde C_{1}+(1+\sigma^{2})\tilde C_{2}=\tilde C<\infty.
\end{equation}
Thus, by Lemma \ref{lmm:complexity1}, we know that
\begin{equation}
\min_{1\leq i\leq k}{R_{h_{i}}}(u^{i}_{h_{i}}, z^{i}_{h_{i}}, \lambda^{i}_{h_{i}})\leq \frac{\tilde C}{k}, \ \ \lim \limits_{k\rightarrow \infty}(k\times \min_{1\leq i\leq k}{R_{h_{i}}}(u^{i}_{h_{i}}, z^{i}_{h_{i}}, \lambda^{i}_{h_{i}}))=0
\end{equation}
holds. 
Therefore, combining the obtained global convergence results, we complete the whole proof of Theorem \ref{convergence theorem}.
\end{proof}

\section{Numerical experiments}
\label{sec:4}
In this section we illustrate the numerical performance of the proposed multi-level ADMM algorithm for PDE-constrained optimization problems. All our computational results are obtained by MATLAB R2017b running on a computer with 64-bit Windows 7.0 operation system, Intel(R) Core(TM) i7-6700U CPU (3.40 GHz), and 32 GB of memory.

First, we introduce the algorithmic details that are common to all examples.
The discretization was carried out by using the standard piecewise linear finite element approach.
To present numerical results, it is convenient to introduce the experimental order of convergence (EOC), which for some positive error function $E(h):=\|u-u_{h}\|_{L^{2}(\Omega)}, h>0$ is defined by
\begin{equation}
{\rm EOC}:=\frac{{\rm log}E(h_{1})-{\rm log}E(h_{2})}{{\rm log}h_{1}-{\rm log}h_{2}}.
\end{equation}
We note that if $E(h)=O(h^{\beta})$, then $EOC\approx \beta$. 
In numerical experiments, we measure the accuracy of an approximate optimal solution by using the corresponding KKT residual error for each algorithm. For the purpose of showing the efficiency of our mADMM, we report the numerical results obtained by running the ihADMM (see \cite{Song2017Fe} for details) and the classical ADMM method to compare with the results obtained by the mADMM. In this case, we terminate all the algorithms when $\eta < 10^{-6}$ with the maximum number of iterations set to 500. For all numerical examples and all algorithms, we choose zeros as the initial values and the penalty parameter $\sigma$ was chosen as $\sigma=0.1\alpha$. About the step length $\tau$, we choose $\tau = 1.618$. 

\begin{example}
\label{ex1}
(\cite{Hinze2009Optimization}, Example 3.3)
Consider
\begin{equation*}
\begin{aligned}
\min \limits_{(y,u)\in H_{0}^{1}(\Omega)\times L^{2}(\Omega)}^{}\ \ J(y,u)&=\frac{1}{2}\|y-y_d\|_{L^2(\Omega)}^{2}+\frac{\alpha}{2}\|u\|_{L^2(\Omega)}^{2} \\
{\rm s.t.}\qquad \quad \quad \ -\Delta y&=u\ \ \ \mathrm{in}\  \Omega, \\
\ \ \ \ \ \ y&=0\quad  \mathrm{on}\ \partial\Omega,\\
\qquad \qquad\quad u&\in  U_{ad}=\{v(x)|a\leq v(x)\leq b, {\rm a.e }\  \mathrm{on}\ \Omega\},
\end{aligned}
\end{equation*}
where the domain is the unit circle $\Omega=B_{1}(0)\subseteq \mathbb{R}^{2}$. Set the desired state $y_{d}=(1-(x_{1}^{2}+x_{2}^{2}))x_{1}$, the parameters $\alpha=0.1, a=-0.2, b=0.2$. 
\end{example}
In this example, the exact solutions of the problem are unknown in advance. Instead we use the numerical solutions computed on the grid with $h=2^{-10}$ as reference solutions.
As an example, the discretized optimal control on the grid with $h=2^{-7}$ is presented in Figure \ref{fig:1}.

\begin{figure}[H]
\centering
\includegraphics[scale=0.35]{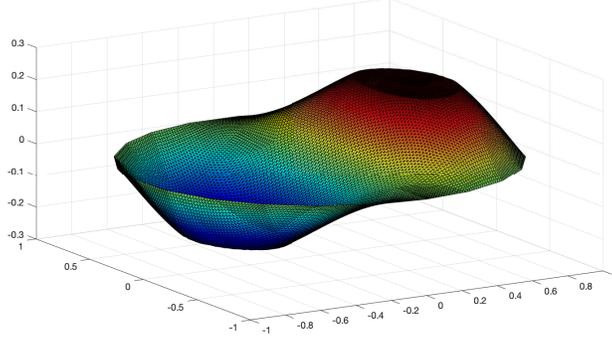}
\caption{Discretized optimal control solution for Example \ref{ex1} on the grid with $h = 2^{-7}$.}
\label{fig:1}       
\end{figure}
The error of the control $u$ {\rm w.r.t.} the $L^{2}$-norm, the EOC for the control, the numerical results for the accuracy of solution, the CPU time and the number of iterations obtained by our mADMM, the ihADMM and the classical ADMM are shown in Table \ref{tab:1}. We can see from Table \ref{tab:1} that our mADMM is highly efficient in obtaining an approximate solution compared with the ihADMM and the classical ADMM in terms of the CPU time, especially when the discretization is in a fine level.  Furthermore, it should be specially mentioned that the numerical results in terms of iterations illustrate the mesh-independent performance of the mADMM and the ihADMM. However, iterations of the classical ADMM will increase with the refinement of the discretization.

\begin{table}[H]
\centering
\caption{The convergence behavior of our mADMM, the ihADMM and the classical ADMM for Example \ref{ex1}.}
\label{tab:1}       
\begin{tabular}{lllllllll}
\hline\noalign{\smallskip}
 h & $\#\rm dofs$ & $E$ &EOC & Index  & mADMM & ihADMM & classical ADMM\\
\noalign{\smallskip}\hline\noalign{\smallskip}
 $2^{-5}$ & 635 &0.00168 &  - & residual  $\eta$& 8.46e-07 & 9.29e-07 & 8.32e-07\\
  & & & & CPU times/s & 0.21 & 0.61 & 0.95\\
  & & & & $\#$iter & 14 & 27 & 63\\
\noalign{\smallskip}\hline\noalign{\smallskip}
 $2^{-6}$ & 2629 &5.57e-04 & 1.5927 & residual  $\eta$& 6.90e-07 & 8.78e-07 & 2.47e-07\\
 &  & & & CPU times/s & 0.43 & 1.73 & 2.13\\
 & & & & $\#$iter & 15 & 26 & 30\\
\noalign{\smallskip}\hline\noalign{\smallskip}
 $2^{-7}$ & 10697 &2.05e-04 & 1.4420& residual  $\eta$& 7.14e-07 & 8.83e-07 & 9.03e-07\\
 & & & & CPU times/s & 1.23 & 7.40 & 30.46\\
 & & & & $\#$iter & 13 & 25 & 54\\
\noalign{\smallskip}\hline\noalign{\smallskip}
 $2^{-8}$ & 43153 &1.13e-04 & 0.8593& residual  $\eta$& 4.29e-07 & 7.16e-07 & 6.93e-07\\
 & & & & CPU times/s & 4.45 & 43.13 & 755.84\\
 & & & & $\#$iter & 13 & 25 & 119\\
\noalign{\smallskip}\hline\noalign{\smallskip}
 $2^{-9}$ & 173345 &6.68e-05 & 0.7584& residual  $\eta$& 1.22e-07 & 8.69e-07 & 2.68e-07\\
 & & & & CPU times/s & 39.15 & 384.39 & 39646.84\\
 & & & & $\#$iter & 14 & 24 & 380\\
\noalign{\smallskip}\hline\noalign{\smallskip}
 $2^{-10}$ & 694849 & - & - & residual  $\eta$& 8.7e-08 & 9.29e-07 & \textbf{3.58e-05}\\
  & & & & CPU times/s & 553.37 & 6451.98 & 289753.72\\
  & & & & $\#$iter & 15 & 23 & \textbf{500}\\
\noalign{\smallskip}\hline
\end{tabular}
\end{table}

\begin{example}
\label{ex2}
(\cite{Hinze2012The}, Example 4.1)
Consider
\begin{equation*}
\begin{aligned}
\min \limits_{(y,u)\in H_{0}^{1}(\Omega)\times L^{2}(\Omega)}^{}\ \ J(y,u)&=\frac{1}{2}\|y-y_d\|_{L^2(\Omega)}^{2}+\frac{\alpha}{2}\|u\|_{L^2(\Omega)}^{2} \\
{\rm s.t.}\qquad \quad \quad \  -\Delta y&=u\ \ \ \mathrm{in}\  \Omega, \\
y&=0\quad  \mathrm{on}\ \partial\Omega,\\
\qquad \qquad\quad u&\in  U_{ad}=\{v(x)|a\leq v(x)\leq b, {\rm a.e }\  \mathrm{on}\ \Omega\}.
\end{aligned}
\end{equation*}
where $\Omega=(0,1)^{2}$, the upper bound is $a=0.3$, the lower bound is $b=1$, and the regularization parameter is $\alpha=0.001$. We choose $y_{d}=-4\pi^{2} \alpha \sin(\pi x)\sin(\pi y)+Sr$, where $r=\min(1,\max(0.3,2\sin(\pi x)\sin(\pi y)))$, $S$ denotes the solution operator. In addition, from the choice of parameters, it is easy to know that $u=r$ is the unique solution of the continuous problem. 
\end{example}
The exact control and the discretized optimal control on the grid with $h = \sqrt{2}/2^{7}$ are presented in Figure \ref{fig:2}. 
\begin{figure}[H]
\centering
	\begin{subfigure}[t]{3in}
		\centering
		\includegraphics[width=3.4in]{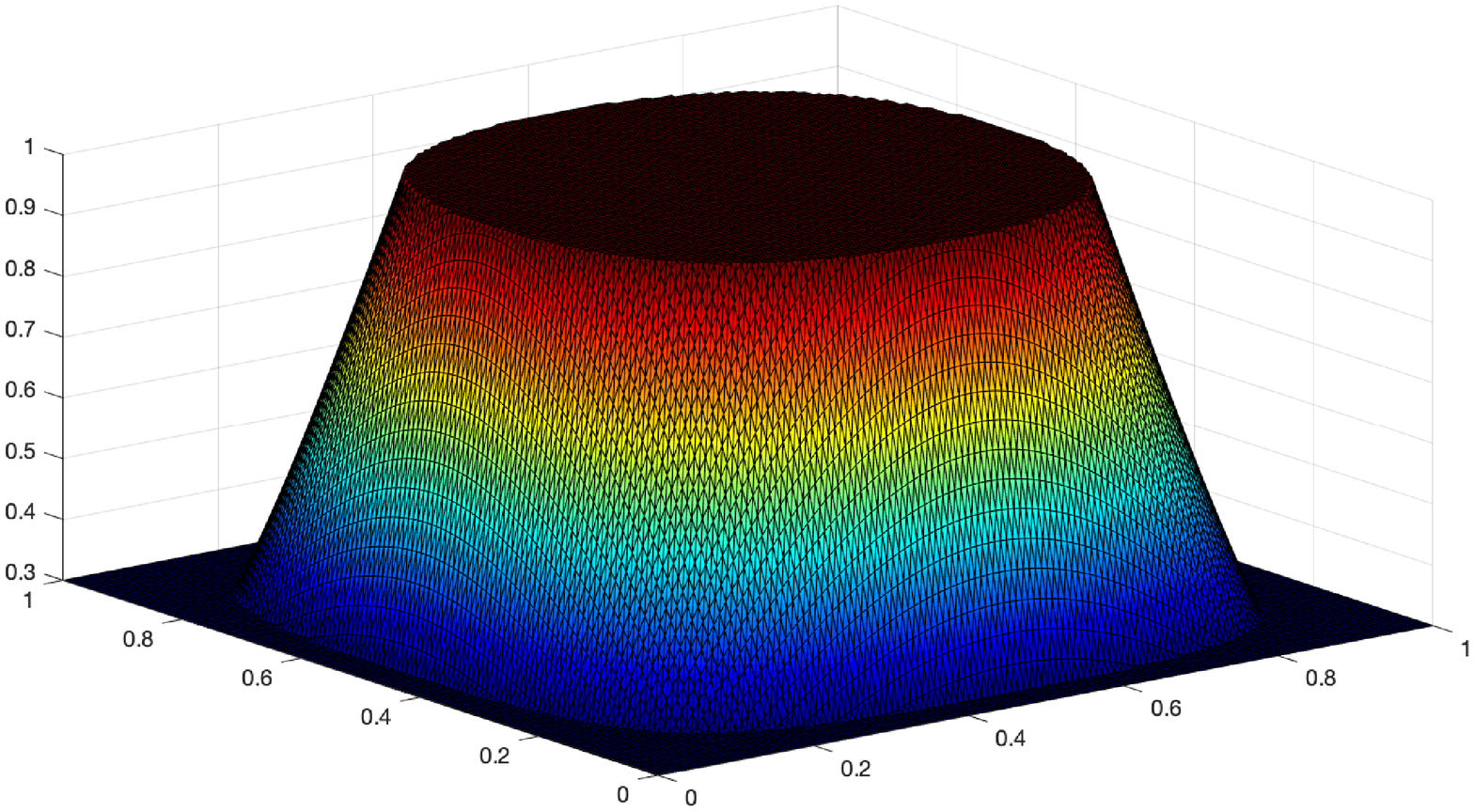}
		\caption{exact control $u$}\label{fig:1a}		
	\end{subfigure}
	\qquad
	\begin{subfigure}[t]{3in}
		\centering
		\includegraphics[width=3.4in]{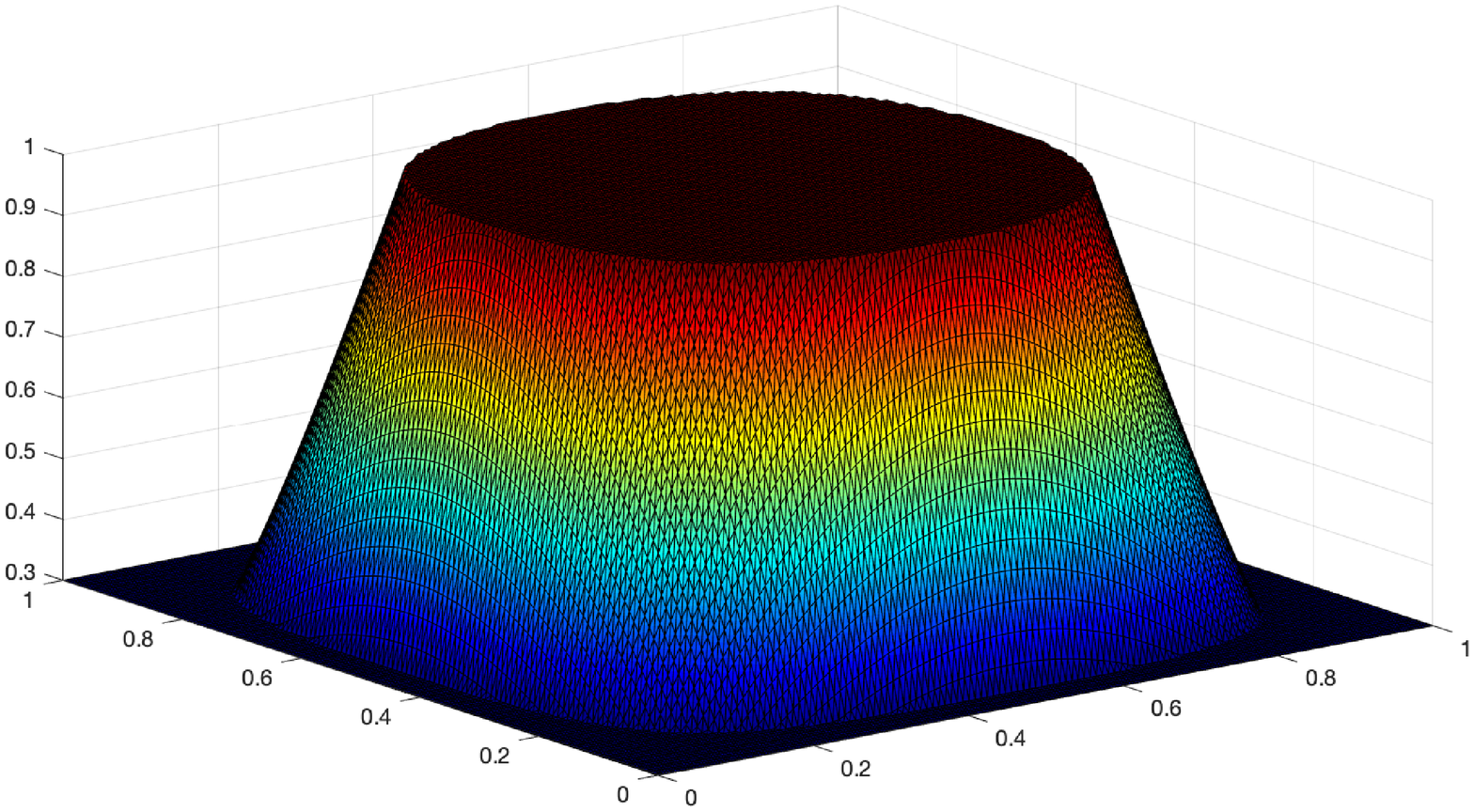}
		\caption{numerical control $u_{h}$}\label{fig:1b}
	\end{subfigure}
	\caption{Exact control solution (left) and discretized optimal control solution (right) for Example \ref{ex2} on the grid with $h = \sqrt{2}/2^{7}$.}\label{fig:2}
\end{figure}
The error of the control $u$ {\rm w.r.t.} the $L^{2}$- norm, the EOC for the control, the numerical results for the accuracy of solution, the CPU time and the number of iterations obtained by our mADMM, the ihADMM and the classical ADMM are shown in Table \ref{tab:2}. Experiment results show that our mADMM has evident advantage on CPU time over the ihADMM and the classical ADMM. Furthermore, we also notice that the numerical results in terms of iteration numbers illustrate the mesh-independent performance of the mADMM. 
\begin{table}[H]
\centering
\caption{The convergence behavior of our mADMM, the ihADMM and the classical ADMM for Example \ref{ex2}.}
\label{tab:2}       
\begin{tabular}{lllllllll}
\hline\noalign{\smallskip}
 h & $\#\rm dofs$ & $E$ &EOC & Index  & mADMM & ihADMM & classical ADMM\\
\noalign{\smallskip}\hline\noalign{\smallskip}
 $\sqrt{2}/2^{4}$ & 225 &0.0172 &- & residual  $\eta$& 9.26e-07 & 9.88e-07 & 9.98e-07\\
 && & & CPU times/s & 0.31 & 0.28 & 0.43\\
 & & & & $\#$iter & 22 & 25 & 120\\
\noalign{\smallskip}\hline\noalign{\smallskip}
 $\sqrt{2}/2^{5}$ & 961 &6.71e-03 &1.3580 & residual  $\eta$& 7.83e-07 & 9.19e-07 & 7.25e-07\\
 & & & & CPU times/s & 0.85 & 0.69 & 0.87\\
 & & & & $\#$iter & 23 & 26 & 32\\
\noalign{\smallskip}\hline\noalign{\smallskip}
 $\sqrt{2}/2^{6}$ & 3969 &2.11e-03 &1.6691& residual  $\eta$& 8.62e-07 & 7.10e-07 & 3.34e-07\\
 & & & & CPU times/s & 2.97 & 3.85 & 4.45\\
 & & & & $\#$iter & 24 & 30 & 32\\
\noalign{\smallskip}\hline\noalign{\smallskip}
 $\sqrt{2}/2^{7}$ & 16129 &8.02e-04 &1.3956& residual  $\eta$& 9.80e-08 & 7.80e-08 & 8.86e-08\\
 && & & CPU times/s & 14.92 & 37.39 & 91.15\\
 & & & & $\#$iter & 23 & 28 & 78\\
\noalign{\smallskip}\hline\noalign{\smallskip}
 $\sqrt{2}/2^{8}$ & 65025 &3.58e-04 &1.1636& residual  $\eta$& 9.04e-07 & 9.62e-07 & 7.05e-07\\
 & & & & CPU times/s & 22.80 & 151.57 & 2457.02\\
 & & & & $\#$iter & 21 & 28 & 183\\
  \noalign{\smallskip}\hline\noalign{\smallskip}
 $\sqrt{2}/2^{9}$ & 261121 &1.81e-04 &0.9840 & residual  $\eta$& 3.09e-07 & 8.43e-07 & 5.56e-07\\
 && & & CPU times/s & 168.58 & 1469.13 & 40739.35\\
 & & & & $\#$iter & 22 & 30 & 283\\
\noalign{\smallskip}\hline
\end{tabular}
\end{table}

\section{Conclusion}
\label{sec:5}
In this paper, we employ a multi-level ADMM algorithm to solve optimization problems with PDE constraints. Instead of solving the discretized problems, we apply the \emph{`optimize-discretize-optimize'} strategy. Such approach has the flexibility that allows us to discretize the subproblems of the inexact ADMM algorithm by different discretization schemes. Motivated by the multi-level strategy, we propose the proper strategy of gradually refining the grid and the strategy of solving the subproblems inexactly. We designed the convergent multi-level ADMM (mADMM) algorithm, which can significantly reduce the computation cost and make the algorithm faster. The convergence analysis and the iteration complexity results $o(1/k)$ is presented. Numerical results demonstrated the efficiency of the proposed mADMM algorithm. 

\section*{Acknowledgements}
We would like to thank Prof. Long Chen very much for his FEM package iFEM \cite{Chen2009} in Matlab.

\end{document}